\documentclass[a4paper,leqno,11pt,twoside]{amsart}
\usepackage{amsmath,amsfonts,amssymb,amsthm,amscd}
\usepackage[utf8]{inputenc}
\usepackage[english]{babel}
\usepackage[colorlinks=true,citecolor=blue, urlcolor=blue, linkcolor=blue,pagebackref]{hyperref}
\usepackage[top=1.1in,bottom=1.1in,left=1.in,right=1.in]{geometry} 
\usepackage{graphicx}
\usepackage{wrapfig}
\usepackage{paralist}
\usepackage{tabto}
\usepackage{standalone}
\usepackage{xfrac}
\usepackage{float}
\usepackage{tikz-cd}
\usepackage{mathtools}

\usepackage{pgf,tikz}
\usetikzlibrary{arrows,chains,
 decorations.pathreplacing,
shapes,%
}

\usepackage[all]{xy} 

\usepackage{enumerate}
\usepackage{xcolor}
\usepackage{aliascnt}

\theoremstyle{plain}

\newtheorem{thmIntr}{Theorem}

\newaliascnt{propIntr}{thmIntr}

\newaliascnt{corIntr}{thmIntr}

\newaliascnt{QU}{thm}

\aliascntresetthe{QU}

\newaliascnt{lem}{thm}
\newtheorem{lem}[lem]{Lemma}
\aliascntresetthe{lem}

\newaliascnt{cor}{thm}
\newtheorem{cor}[cor]{Corollary}
\aliascntresetthe{cor}

\newaliascnt{prop}{thm}
\newtheorem{prop}[prop]{Proposition}
\aliascntresetthe{prop}

\theoremstyle{definition}

\newaliascnt{rem}{thm}
\newtheorem{rem}[rem]{Remark}
\aliascntresetthe{rem}

\newaliascnt{defn}{thm}

\aliascntresetthe{defn}

\newaliascnt{ex}{thm}

\aliascntresetthe{ex}

\numberwithin{equation}{section}

\def\bP{\ensuremath{\mathbb{P}}}

\def\bZ{\ensuremath{\mathbb{Z}}}

\def\cI{\ensuremath{\mathcal{I}}}

\def\cO{\ensuremath{\mathcal{O}}}
\def\cP{\ensuremath{\mathcal{P}}}

\def\tC{\ensuremath{\widetilde{C}}}

\def\Db{\mathop{\mathrm{D}^{\mathrm{b}}}\nolimits}

\DeclareMathOperator{\Pic}{Pic}

\DeclareMathOperator{\sExt}{\mathcal{E}\emph{xt}}
\DeclareMathOperator{\Hom}{Hom}
\DeclareMathOperator{\sHom}{\mathcal{H}\emph{om}}
\DeclareMathOperator{\NS}{NS}
\DeclareMathOperator{\rk}{rk}
\DeclareMathOperator{\Id}{Id}

\DeclareMathOperator{\ch}{ch}

\DeclareMathOperator{\irr}{irr}
\DeclareMathOperator{\Gr}{Gr}

\DeclareMathOperator{\Nm}{Nm}

\def\longarrow#1#2{\mathchoice{#2}{#1}{#1}{#1}}
\def\to{\longarrow{\rightarrow}{\longrightarrow}}

\let\shortmapsto\mapsto
\def\mapsto{\longarrow{\shortmapsto}{\longmapsto}}

\definecolor{applegreen}{rgb}{0.55, 0.71, 0.0}

\title[Picard bundles and the degree of irrationality of Jacobians and Pryms]{Picard bundles and the degree of irrationality of Jacobians and Pryms}
\author[F.~Moretti and A.~Rojas]{Federico Moretti and Andr\'es Rojas}

\address{Federico Moretti: Department of Mathematics, State University of New York at Stony Brook \hfill\newline\texttt{}
\indent 100 Nicolls Rd, Stony Brook, NY 11794, USA}
\email{federico.moretti@stonybrook.edu}

\address{Andrés Rojas: Departament de Matemàtiques i Informàtica, Universitat de Barcelona \hfill\newline\texttt{}
\indent Gran Via de les Corts Catalanes 585, 08007 Barcelona, Spain}
\email{andresrojas@ub.edu}

\begin{document}

\begin{abstract}
For a smooth projective curve of genus $g$, we study some positivity properties of (twisted)  Picard bundles on the symmetric product. As applications, we prove that the degree of irrationality of any genus $g$ Jacobian is bounded from above by $2^g$, and the degree of irrationality of any $(g-1)$-dimensional Prym variety is bounded from above by $2^{2g-3}$.
\end{abstract}

\maketitle

\setcounter{tocdepth}{1}

\section{Introduction}

Let $C$ be a smooth projective curve of genus $g\ge 2$. On its Jacobian $JC$ one can construct several vector bundles, usually referred to as \emph{Picard bundles}, whose fibers parametrize the cohomology groups of all line bundles of a fixed degree on $C$. These vector bundles were introduced in classical works of Mattuck \cite{mattuck} and Schwarzenberger \cite{schw}, and redefined by Mukai \cite{mukai-duality} in terms of the Fourier--Mukai transform. Beyond their natural appearance in the determinantal realization of the Brill--Noether loci $W^r_d(C)$, they have been explored from several perspectives, including stability \cite{kempf,ein-laz}, cohomological regularity and continuous global generation \cite{parpopa-reg,pareschi-generation}, or potential approaches to the Schottky problem \cite{debarre}.

In this note we focus on the rank-$g$ Picard bundle $F$ whose fiber over $\alpha\in JC$ admits a canonical identification
\[
F|_\alpha \cong H^1\!\big(C,\cO_C(-p_0)\otimes \alpha\big),
\]
where $p_0\in C$ denotes a fixed base point\footnote{For the purposes of this work, the line bundle $\mathcal{O}_C(-p_0)$ can be replaced by any line bundle of degree $-1$.}. We investigate some elementary properties of $F$ by passing to the symmetric product $C^{(g)}$.
More precisely, consider the Abel--Jacobi map
\[
\pi:C^{(g)}\to \Pic^0(C)=JC,\qquad p_1+...+p_g\longmapsto \cO_C(p_1+...+p_g-gp_0)
\]
and denote by
\[
x:=p_0+C^{(g-1)}\subset C^{(g)}
\]
the standard divisor defined by $p_0$. 
We prove that the vector bundle $\widetilde{F}:=\pi^*F\otimes\cO_{C^{(g)}}(x)$ on $C^{(g)}$ is globally generated and satisfies $c_g(\widetilde{F})=2^g$, and we give a precise description of the zero locus of global sections of $\widetilde{F}$ from which we constraint the rank of its determinant map.




The main geometric application of our analysis of $\widetilde{F}$ is the following upper bound on the degree of irrationality of Jacobian varieties:

\begin{thmIntr}\label{thm:intro-jacobian-bound}
For any smooth curve $C$ of genus $g\ge 2$, the inequality $\irr(JC)\le 2^g$ holds.
\end{thmIntr}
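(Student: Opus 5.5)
Since $\pi:C^{(g)}\to JC$ is birational, $\irr(JC)=\irr(C^{(g)})$. It therefore suffices to construct a dominant rational map $C^{(g)}\dashrightarrow \bP^g$ of degree at most $2^g$. I would obtain it from the bundle $\widetilde{F}=\pi^*F\otimes\cO_{C^{(g)}}(x)$, which has rank $g=\dim C^{(g)}$, is globally generated, and satisfies $c_g(\widetilde F)=2^g$ (these are the properties announced in the introduction and proved earlier).

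\textbf{The map.} Pick a general $(g+1)$-dimensional subspace $V\subset H^0(C^{(g)},\widetilde F)$. Because $\widetilde F$ is globally generated, the evaluation map $V\otimes\cO\to\widetilde F$ is surjective at a general point. Its kernel there is then a line in $V$, which defines the rational map
\[
\phi:C^{(g)}\dashrightarrow \bP(V)\cong\bP^g,\qquad z\longmapsto \ker\big(V\to \widetilde F|_z\big).
\]
A point $[s]\in\bP(V)$ has as its fiber the zero locus $Z(s)$ of the section $s$, minus the degeneracy locus $D$ where $V\to\widetilde F$ is not surjective. In this way every fiber of $\phi$ is contained in the zero scheme of a section of $\widetilde F$.

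\textbf{Counting points in fibers.} By the standard Bertini-type statement for globally generated bundles, a general section of $\widetilde F$ has a reduced zero-dimensional zero locus of length $c_g(\widetilde F)=2^g$. The general point of $\bP(V)$ is a general section of $\widetilde F$, so its fiber under $\phi$ has at most $2^g$ points. This gives $\deg\phi\le 2^g$ once dominance is known. Dominance holds whenever $c_g\neq 0$. Indeed, if $\phi$ were not dominant, the general fiber would be positive-dimensional. But a general $[s]$ in the image lies in $\bP(V)$, which parametrizes general sections, and its zero locus is finite, a contradiction. Alternatively, count dimensions on the incidence $\{(z,[s]) : s(z)=0\}$, which has dimension $g$ and maps onto $\bP(V)$ with finite general fiber.

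\textbf{Main obstacle.} The delicate step is showing that a general section from $V$, and not merely a general section of the full $H^0$, has finite zero locus of the expected degree. One also needs that the zeros of $s$ do not all fall inside $D$, since otherwise the count degenerates. This is exactly where the paper's explicit description of zero loci of sections of $\widetilde F$, and the control on the rank of the determinant map $\wedge^g H^0(\widetilde F)\to H^0(\det\widetilde F)$, should enter. That control guarantees that $\phi$ is generically finite, with $\deg\phi$ equal to $2^g$ minus the number of zeros lying in $D$. In any case $0<\deg\phi\le 2^g$, which gives $\irr(JC)\le 2^g$.
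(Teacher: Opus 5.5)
Your overall strategy is the paper's: take the rational map $C^{(g)}\dashrightarrow\bP^g$ induced by a general $(g+1)$-dimensional space $V$ of sections of the globally generated bundle $\widetilde F$, whose fibers are zero loci of sections away from the degeneracy locus $D$ (\autoref{compdegree} and \autoref{deg-gg}). There are, however, two genuine gaps.

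\textbf{Dominance.} Your dominance argument does not work. If $\phi$ were not dominant, its image would be a proper closed subset of $\bP(V)$. Points of that image are then \emph{not} general sections, so finiteness of $Z(s)$ for general $s$ gives no contradiction. The incidence-variety variant has the same hole: the component of $\{(z,[s]):s(z)=0\}$ that dominates $\bP(V)$ could lie entirely over $D$.

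What is needed is exactly that $Z(s)\not\subset D$ for general $[s]\in\bP(V)$. You flag this, but then assert $0<\deg\phi$ without proof. It also does not require the explicit zero-locus description or the rank of $\det_2$; the paper uses neither for this theorem. The paper closes the gap using global generation alone:
\begin{enumerate}
\item Take a general $s$ with $Z(s)=\{p_1,\dots,p_k\}$ finite.
\item Choose $W\in\Gr(g,H^0(\widetilde F))$ with $W\otimes\cO\to\widetilde F$ surjective at every $p_i$. This is a finite intersection of nonempty open conditions.
\item Put $V=\langle s\rangle\oplus W$. Then $Z(s)\cap D_V=\emptyset$.
\end{enumerate}
This condition is open on the irreducible incidence $\{(V,[s]):[s]\in\bP(V)\}$, so it holds for general $V$ and general $[s]\in\bP(V)$. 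Since $Z(s)\neq\emptyset$ when $c_g(\widetilde F)>0$, this gives dominance, and in fact $\deg\phi=c_g(\widetilde F)$.

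\textbf{The Chern class.} You take $c_g(\widetilde F)=2^g$ as already proved. In the paper, this computation is precisely what the proof of the theorem consists of; the other inputs are just \autoref{deg-gg} and \autoref{prop:bp-free}. The computation runs as follows. From $c_i(F)=[W_{g-i}(C)]$ (Mattuck, Schwarzenberger) one gets $c_i(\pi^*F)=(\pi^*\theta)^i/i!$. Combining this with $x^k(\pi^*\theta)^{g-k}=g!/k!$ and the splitting principle gives
\[
c_g(\widetilde F)=\sum_{k=0}^g x^k\frac{(\pi^*\theta)^{g-k}}{(g-k)!}=\sum_{k=0}^g\binom{g}{k}=2^g.
\]
Without this step, your argument yields at best $\irr(JC)\le c_g(\widetilde F)$, and even that only conditionally on $c_g(\widetilde F)>0$.
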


The \emph{degree of irrationality} $\irr(X)$ of an integral projective variety $X$  is the minimal degree of a rational dominant map
$X\dashrightarrow \mathbb P^{\dim(X)}$. This birational invariant, generalizing to higher dimension the classical notion of gonality of a curve, has recently been the object of considerable activity (see \cite{CM} for a survey on recent work).
However, an essentially complete solution is known in only very few cases, such as hypersurfaces in projective space \cite{BDELU,fanohyp} or abelian surfaces \cite{nathchen,martin}. 
For abelian varieties of dimension $\geq3$ one has nontrivial lower bounds \cite{av}, but relevant upper bounds are not known; even for Jacobians, the standard upper bound comes from symmetrizing a map $C\to\bP^1$ and gives $\irr(JC)\le \mathrm{gon}(C)^g\le \lfloor \frac{g+3}{2}\rfloor^g$, see \cite[Problem 4.6]{CM}.

The key point in the proof of \autoref{thm:intro-jacobian-bound} is the fact that a globally generated vector bundle $E$ of rank $\dim(X)$ with positive top Chern class produces, via a determinantal construction,
a rational dominant map to $\bP^{\dim(X)\\}$ whose degree is bounded by the top Chern class of $E$ (see \autoref{kernel-bdles}, building on results from \cite{mor}). Applying this general principle to the vector bundle $\widetilde{F
}$ on the symmetric product $C^{(g)}$ results in the upper bound $\irr(JC)\leq 2^g$.

Notably, the rational maps $JC\dasharrow \bP^g$ we construct are not rigid. For a fixed divisor $\Theta$ representing the canonical principal polarization on $JC$, we obtain a large family---parametrized by an open subset of the Grassmannian $\Gr(g+1,2g)$---of linear projections
\[
JC\hookrightarrow |(g+1)\Theta|^\vee\dasharrow \bP^g
\]
of degree $2^g$. It turns out that such projections admit a concrete geometric realization: considering the closed immersion
\[
C\hookrightarrow \bP^{2g-1}=|\omega_C((g+1)p_0)|^\vee
\]
and fixing a linear subspace $\bP^g\subset \bP^{2g-1}$, the corresponding map $C^{(g)}\dasharrow \bP^g$ sends a general $p_1+...+p_g\in C^{(g)}$ to the intersection point of $\bP^g$ with the linear span $\langle p_1,...,p_g\rangle\subset \bP^{2g-1}$. We will address this geometric perspective for arbitrary symmetric products of curves and Brill--Noether loci in future work.

In the last part of the paper we exploit a similar viewpoint, namely the use of suitable Picard type vector bundles, to find an upper bound for the degree of irrationality of Prym varieties. Recall that, for a smooth curve $C$ of genus $g$ and an irreducible \'etale double cover $f:\widetilde C\to C$, the associated Prym variety $\Pr(f)$ as the connected component of
$\ker(\Nm_f:J\widetilde C\to JC)$
containing the origin of $J\tC$.
It is a principally polarized abelian variety of dimension $g-1$. We obtain the following:

\begin{thmIntr}\label{thm:intro-prym-bound}
For any $(g-1)$-dimensional Prym variety $\Pr$ the inequality $\irr(\Pr)\leq 2^{2g-3}$ holds.
\end{thmIntr}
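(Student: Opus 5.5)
The plan is to mimic the Jacobian argument: produce a $(g-1)$-dimensional variety $X$ birational to $\Pr(f)$, together with a globally generated vector bundle $E$ of rank $g-1$ on $X$ with $0<c_{g-1}(E)\le 2^{2g-3}$. Then the determinantal principle of \autoref{kernel-bdles} gives a dominant rational map $X\dashrightarrow\bP^{g-1}$ of degree at most $c_{g-1}(E)$, hence $\irr(\Pr)\le 2^{2g-3}$.

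For the birational model I would use the classical parametrization of the Prym by effective divisors on $\tC$. Let $\tilde g=2g-1$ be the genus of $\tC$. The Prym is a component of $\Nm_f^{-1}(\omega_C)$ inside $\Pic^{2g-2}(\tC)$, and its general point $L$ satisfies $h^0(L)=0$ or $2$ depending on the component. Taking the component $P^-$ where $h^0$ is odd and generically $1$, the map $\{D\in\tC^{(2g-2)}:\Nm(D)\in|\omega_C|\}\supset X\to P^-$, $D\mapsto\cO(D)$, is birational onto $P^-\cong\Pr$. Here $X$ is the relevant component of the preimage of $|\omega_C|=\bP^{g-1}$ under $\Nm:\tC^{(2g-2)}\to C^{(2g-2)}$. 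Thus $X$ is a degree-$2^{2g-2}$ cover of $\bP^{g-1}$ (a sub-cover of half the sheets), and it has dimension $g-1$. This already hints at the count: the full preimage has degree $2^{2g-2}$, split equally between the two components, and each component has degree $2^{2g-3}$ over $|\omega_C|$.

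That observation already suggests the result directly. The restriction of $\Nm$ to $X$ is a finite map $X\to|\omega_C|\cong\bP^{g-1}$ of degree $2^{2g-3}$, provided that $X$ (equivalently $P^+$ or $P^-$) is one of the two components into which the preimage splits, each of degree $2^{2g-3}$. So the steps are as follows.
\begin{enumerate}
\item Show that $\Nm^{-1}(|\omega_C|)\subset\tC^{(2g-2)}$ has dimension $g-1$ and that $\Nm$ restricted to it is finite of degree $2^{2g-2}$, since a reduced canonical divisor has $2^{2g-2}$ liftings.
\item Show, via Mumford's parity of $h^0$, that it splits into two parts according to parity, interchanged by switching one point over a sheet, each of degree $2^{2g-3}$.
\item Show that the odd part maps birationally onto $P^-$, a translate of $\Pr$.
\end{enumerate}

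The main obstacle is step 3. Over a general $L\in P^-$ one has $h^0(L)=1$, so the fiber is a single divisor. This requires knowing that $h^0=1$ generically on $P^-$, which is classical (Mumford/Welters: $h^0(L)\ge3$ only in codimension $\ge 3$, and the odd component has generic $h^0=1$). Irreducibility of the odd part is not needed: the component dominating $P^-$ has degree at most $2^{2g-3}$ over $\bP^{g-1}$. If this direct route were insufficient, I would instead follow the paper's approach of a twisted Picard-type bundle of rank $g-1$ on this model and compute its top Chern class, aiming for the same bound.
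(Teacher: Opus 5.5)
Your argument is correct, and it takes a genuinely different and much more elementary route than the paper.

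You work on the same birational model $X^-=\{D\in\tC^{(2g-2)}:\cO_{\tC}(D)\in\Pr^-\}$. Instead of building a vector bundle on it, you use the natural finite morphism $X^-\to|\omega_C|\cong\bP^{g-1}$, $D\mapsto f_*D$. Over a general (reduced) canonical divisor there are $2^{2g-2}$ distinct lifts. Moving one point $p$ of a lift to $\sigma(p)$ twists by $\cO_{\tC}(\sigma(p)-p)$, which lies in the non-identity component of $\ker\Nm_f$ and so exchanges $\Pr^+$ and $\Pr^-$. Hence exactly $2^{2g-3}$ lifts lie in $X^-$. Meanwhile $X^-\to\Pr^-$ is birational because $h^0=1$ generically on $\Pr^-$, a fact the paper also uses.

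Two things to tidy up:
\begin{itemize}
\item State explicitly that the component of $X^-$ dominating $\Pr^-$ has dimension $g-1$. Since the norm map is finite, this component surjects onto $\bP^{g-1}$, with degree at most $2^{2g-3}$.
\item Fix the slips ``$h^0(L)=0$ or $2$'' (it is $0$ or $1$) and ``degree-$2^{2g-2}$ cover'' for $X$.
\end{itemize}

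The paper's route is different. It constructs $\widetilde G=\nu^*((\pi^*G)(x))$ and proves its global generation via Pareschi's generation by $\{\tC\}$ together with the base-locus argument involving $\Theta^L$. It then computes $c_{g-1}(\widetilde G)=2^{2g-3}$ through Beauville's special subvarieties and applies \autoref{deg-gg}.

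What each approach buys:
\begin{itemize}
\item The paper's method runs parallel to the Jacobian case, where no analogue of the norm map exists. It also yields a whole Grassmannian-worth of degree-$2^{2g-3}$ maps.
\item Your route gives one explicit finite morphism using classical Prym theory alone, and it does not require setting aside hyperelliptic $C$.
\end{itemize}

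The two are numerically consistent. A hyperplane of $|\omega_C|$ pulls back to $x_p+x_{\sigma(p)}\equiv 2x$, and the paper's computation gives $x^{g-1}=2^{g-2}$ on $\widetilde{X^-}$, so $(2x)^{g-1}=2^{2g-3}$.
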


In view of our results, and to some extent of our techniques, it would be tempting to speculate upper bounds for the degree of irrationality of a $d$-dimensional principally polarized abelian variety $(A,\Theta)$ in terms of $d$ and of the minimal integer $k$ such that the class $k\frac{\Theta^{d-1}}{(d-1)!}$ is represented by an effective curve. Of course, a first step in this direction should be an analysis of Picard type bundles on Prym--Tjurin varieties.

\textbf{Acknowledgements.} We thank Gabi Farkas and Rob Lazarsfeld for some discussions related to this circle of ideas. We are especially grateful to Martí Lahoz and Joan Carles Naranjo for suggesting the strategy to attack \autoref{thm:intro-prym-bound}. Rojas was supported by the Spanish MICINN project PID2023-147642NB-I00.

\section{Preliminaries}

\subsection{Rational maps via kernel bundles.}\label{kernel-bdles}
Our starting point is the observation that, for a given smooth projective variety $X$ of dimension $n$, one can study rational maps $X\dasharrow \bP^n$ via the associated kernel sheaf. Namely, given $L\in\Pic(X)$ and a linear system $V\in \mathrm{Gr}(n+1,H^0(L))$ without base divisors, one can consider the reflexive sheaf $K$ sitting in an exact sequence
\[
\begin{tikzcd}
    0 \arrow{r} & K \arrow{r} & V \otimes \mathcal O_X \arrow{r}{ev} & L \otimes \mathcal I \arrow{r} & 0,
\end{tikzcd}
\]
where $B:=\underline{\mathop{\mathrm{Spec}}}(\cO_X/\cI)$ denotes the base locus of the rational map $\varphi_V:X \dashrightarrow \bP^n=\mathbb P(V^\vee)$. Dualizing yields an inclusion $V^\vee \subset H^0(K^\vee)$; note that $V^\vee$ generates $K^\vee$ in codimension $1$ (away from $B$) and also that $H^0(K)=0$.

More generally, following \cite{mor} 
one can consider \emph{good pairs} $(E,V^\vee)$ where $E$ is a rank $n$ reflexive sheaf with $H^0(E^\vee)=0$, and $V^\vee\in \mathrm{Gr}(n+1,H^0(E))/\mathrm{Aut}(E)$ generates $E$ in codimension $1$. The following holds:

\begin{prop}[{\cite[Proposition 1.5]{mor}}]\label{compdegree}

	Let $X$ be a smooth projective variety of dimension $n$. There is a \emph{one-to-one} correspondence\[
        \big \{ \textrm{rational non-degenerate maps } X\dashrightarrow \mathbb P^n\big \} \longleftrightarrow \big \{ \textrm{good pairs } (E,V^\vee)  \big \}
        \]
        \[
       \quad \quad \quad  \quad  \quad  \quad \quad \, \,  \, \, \, \, (L,V)\longmapsto (K^\vee,V^\vee).
        \]
       Moreover, the fibers of the rational map $\varphi_V:X\dashrightarrow \mathbb P(V^\vee)$ are described by the rule
       \[
       \varphi_V^{-1}([s])\cap (X\setminus B)=Z(s)\cap (X\setminus B),
       \]
       where $s$ can be seen both  as a point of the target projective space (left-hand side of the equality) and as a global section of the reflexive sheaf $E$ (right-hand side of the equality).
\end{prop}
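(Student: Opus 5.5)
We prove \autoref{compdegree} by constructing explicit inverse maps between the two sets and then reading off the fibers from the evaluation sequence.

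\textbf{From maps to good pairs.} Let $\varphi: X\dashrightarrow \bP^n$ be non-degenerate. Since $X$ is smooth, $\varphi$ is given by a unique pair $(L,V)$ with $V\subset H^0(L)$ of dimension $n+1$ and no fixed divisor; non-degeneracy makes $V\to H^0(L)$ injective. Put $K=\ker(ev)$. We check the following.
\begin{itemize}
\item $K$ is reflexive of rank $n$. It is the kernel of a map from a locally free sheaf to the torsion-free sheaf $L\otimes\cI$, so it is a second syzygy-type sheaf and hence reflexive.
\item $H^0(K)=0$. We have $H^0(K)=\ker(V\to H^0(L))=0$.
\item $V^\vee\subset H^0(K^\vee)$ generates $K^\vee$ in codimension $1$. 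Since $B$ has codimension $\ge 2$, away from $B$ the sequence is a sequence of vector bundles; dualizing gives a surjection $V^\vee\otimes\cO\to K^\vee$ there. Because $K^\vee$ is reflexive, $H^0$ may be computed on $X\setminus B$.
\item $H^0(K)=0$ gives $H^0((K^\vee)^\vee)=0$.
\end{itemize}

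\textbf{From good pairs to maps.} Given $(E,V^\vee)$, consider $V^\vee\otimes\cO\to E$, surjective on an open $U$ with complement of codimension $\ge2$. On $U$ the kernel is a line bundle $M$, which we extend to $X$ by reflexivity. Dualizing gives $E^\vee|_U\hookrightarrow V\otimes\cO_U\to M^{-1}|_U$. Set $L:=M^{-1}$, extended to $X$. The sections $V\to H^0(U,L)=H^0(X,L)$ define the map.
\begin{itemize}
\item $V\to H^0(L)$ is injective, because its kernel lies in $H^0(E^\vee)=0$.
\item There is no fixed divisor, since $E^\vee$ is reflexive and the cokernel on $U$ is $L$.
\end{itemize}
The two constructions are mutually inverse on $U$, and reflexive sheaves are determined by their restriction to $U$. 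The quotient by $\mathrm{Aut}(E)$ accounts exactly for the choice of identification of $K^\vee$ with $E$. Checking this carefully, including that the Grassmannian point is well defined modulo $\mathrm{Aut}(E)$, is the main technical step.

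\textbf{Fibers.} Work on $X\setminus B$, where everything is locally free and $0\to K\to V\otimes\cO\to L\to0$ is exact. A point $x$ maps to $[s]\in\bP(V^\vee)$ exactly when the hyperplane $\ker(V\to L_x)=K_x$ equals $\ker(s)$, with $s\in V^\vee$. This happens if and only if $s|_{K_x}=0$, that is, the image of $s$ in $K_x^\vee=E_x$ vanishes. Hence $\varphi_V^{-1}([s])\cap(X\setminus B)=Z(s)\cap(X\setminus B)$.
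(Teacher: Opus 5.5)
The paper does not prove this statement; it quotes it from \cite[Proposition 1.5]{mor}, so there is no in-paper argument to compare with. Your route is the natural one and is sound in outline: dualize the evaluation sequence off the base locus, extend across codimension $\ge 2$ by reflexivity, and read the fibres off the sub-line-bundle $L^{-1}\subset V^\vee\otimes\cO$. However, you explicitly leave undone the step you call the main one, and a few supporting claims need repair.

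\textbf{The correspondence.}
\begin{enumerate}
\item \emph{Injectivity of $V^\vee\to H^0(K^\vee)$.} A point of $\mathrm{Gr}(n+1,H^0(K^\vee))$ requires this, and you never justify it. On $X\setminus B$ (where, as you note, $H^0(K^\vee)$ may be computed) the dual sequence $0\to L^{-1}\to V^\vee\otimes\cO\to K^\vee\to 0$ is exact. So the kernel is $H^0(L^{-1})$, which vanishes because $L$ is effective and nontrivial ($h^0(L)\ge n+1\ge 2$).
\item \emph{Why there is no fixed divisor.} In the reverse direction, shrink $U$ into the locally free locus of $E$; its complement still has codimension $\ge 2$, since a reflexive sheaf on a smooth variety is locally free off codimension $\ge 3$. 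Only then is $0\to M\to V^\vee\otimes\cO_U\to E|_U\to 0$ a sequence of bundles whose dual $0\to E^\vee|_U\to V\otimes\cO_U\to L|_U\to 0$ is surjective on the right. This surjectivity, not reflexivity of $E^\vee$, is the reason $V$ has no fixed divisor.
\item \emph{Mutual inverses.} This check is short and should be written out.
\begin{itemize}
\item Starting from $(L,V)$: on $X\setminus B$ the kernel of $V^\vee\otimes\cO\to K^\vee$ is $L^{-1}$. Line bundles extend uniquely across codimension $2$ on smooth $X$, so you recover $L$ and the same map $V\to H^0(L)$.
\item Starting from $(E,V^\vee)$: the exact sequence in (2) gives $K|_U=E^\vee|_U$ compatibly with the inclusions into $V\otimes\cO_U$. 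Hence $K\cong E^\vee$ and $K^\vee\cong E$ by reflexivity. The two maps $V^\vee\otimes\cO\to E$ coincide because they agree on $U$ and $E$ is torsion free.
\end{itemize}
Consequently, two subspaces of $H^0(E)$ inducing the same map differ by an element of $\mathrm{Aut}(E)$, which is exactly why that quotient appears. Note also that $(L,V)$ determines the map only up to $\mathrm{Aut}(\bP^n)$, so the left-hand side must be read up to projective equivalence.
\end{enumerate}

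\textbf{The fibres.} Your pointwise argument gives only a set-theoretic equality. The statement, however, is about fibres as schemes: the paper later reads off the fibre of $\varphi_V$ as $Z(s)$ of length $c_n(E)$. The scheme-theoretic version follows by phrasing your argument through the Euler sequence.
\begin{itemize}
\item On $X\setminus B$, the morphism $\varphi_V$ pulls back $0\to\cO(-1)\to V^\vee\otimes\cO\to T_{\bP}(-1)\to 0$ to $0\to L^{-1}\to V^\vee\otimes\cO\to E\to 0$.
\item The image of $s$ in $T_{\bP}(-1)$ has zero scheme exactly the reduced point $[s]$.
\item Its pullback is the section $s$ of $E$.
\end{itemize}
Hence $\varphi_V^{-1}([s])=Z(s)$ as schemes on $X\setminus B$.
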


For varieties of dimension $\ge 3$, computing the degree of a rational map (even understanding if a map is generically finite) using the description above can be difficult. However, the situation gets simpler when dealing with globally generated vector bundles:

	\begin{lem}
		Let $X$ be a smooth projective variety of dimension $n$, and let $E$	be a globally generated vector bundle of rank $n$ with $c_n(E)>0$. Then for a general $V^\vee\in \mathrm{Gr}(n+1,H^0(E))$ the induced rational map $\varphi_V:X \dashrightarrow  \mathbb {P}(V^\vee)$ is generically finite of degree $c_n(E)$.	\end{lem}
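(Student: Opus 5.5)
Since $E$ is globally generated of rank $n$, the evaluation map $H^0(E)\otimes\cO_X\to E$ is surjective, and $V^\vee\in\mathrm{Gr}(n+1,H^0(E))$ gives a map $V^\vee\otimes\cO_X\to E$. The plan is to identify $(E,V^\vee)$ with the good pair attached to $\varphi_V$, and then use the fiber description of \autoref{compdegree} together with a Kleiman--Bertini type transversality argument for sections of globally generated bundles.

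\textbf{Step 1: $V^\vee$ generates $E$ outside codimension $\geq 2$.} The degeneracy locus $D_V$ where $V^\vee\otimes\cO_X\to E$ fails to be surjective is the locus of points $x$ where the image of $V^\vee$ in the fiber $E_x$ has dimension $<n$. Since $E$ is globally generated, each restriction map $H^0(E)\to E_x$ is surjective. Its kernel $W_x$ has codimension $n$ in $H^0(E)$. So $x\in D_V$ exactly when $\dim(V^\vee\cap W_x)\ge 2$, a Schubert condition of codimension $2\cdot 1=2$ in the Grassmannian. An incidence-variety dimension count over $X$ then gives, for general $V^\vee$, that $D_V$ has codimension $\ge 2$ in $X$.

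\textbf{Step 2: $(E,V^\vee)$ is a good pair.} We also need $H^0(E^\vee)=0$. Here I would argue directly: a nonzero section of $E^\vee$ is a map $\psi:E\to\cO_X$. Composing with the surjection $H^0(E)\otimes\cO_X\to E$ gives a nonzero map, hence a nonzero constant functional on $H^0(E)$ factoring through $\psi$. Thus $\psi$ is surjective and $E\cong E'\oplus\cO_X$ up to the splitting coming from a section, and then $c_n(E)=0$, a contradiction. Alternatively, $(E,V^\vee)$ need only satisfy the hypotheses used in the correspondence, and the map $V^\vee\otimes\cO\to E$ is generically surjective. Its dual $E^\vee\hookrightarrow V\otimes\cO_X$ has cokernel a rank-one torsion-free sheaf $L\otimes\cI$ with $L=\det E$, and this defines $\varphi_V$ directly. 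I would take this second route, which is the most robust. By \autoref{compdegree}, away from the base locus $B=D_V$ the fiber of $\varphi_V$ over $[s]$ is $Z(s)$ for $s\in V^\vee$.

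\textbf{Step 3: degree count.} For general $s\in H^0(E)$, since $E$ is globally generated, the zero locus $Z(s)$ is reduced, of dimension $0$, and of length $c_n(E)>0$ (Kleiman transversality), and it avoids any fixed closed subset of codimension $\ge1$. The delicate point, which I expect to be the main obstacle, is the uniformity between $V^\vee$ and $s$. We need that a general $s$ in a general $V^\vee$ has $Z(s)\cap D_V=\emptyset$. I would handle this by an incidence count on $\{(x,V^\vee,s): s\in V^\vee,\ s(x)=0,\ x\in D_V\}$. The condition $s(x)=0$ imposes $n$ conditions on $s$, while $D_V$ has dimension $\le n-2$. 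Hence for general $V^\vee$, the set of $[s]\in\bP(V^\vee)$ vanishing somewhere on $D_V$ has dimension $\le n-2<n$. A general point $s$ of $\bP(V^\vee)$ is also a general section of $H^0(E)$, since the union of all such $\bP(V^\vee)$ covers $\bP(H^0(E))$. So the general fiber of $\varphi_V$ consists of exactly $c_n(E)$ reduced points. Since $c_n(E)>0$, $\varphi_V$ is dominant and generically finite of degree $c_n(E)$.
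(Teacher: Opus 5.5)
Your strategy is sound and differs from the paper's, but Step 3 contains a miscount that you need to repair.

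\textbf{Comparison with the paper.} The paper avoids dimension counts entirely. For codimension-one generation, it takes a general $W\in\Gr(n,H^0(E))$, whose degeneracy locus $\Gamma_W$ is a divisor. It then adds one general section generating $E$ at a chosen point of each component of $\Gamma_W$. For the disjointness of $Z(s)$ from the base locus, it reverses the order of choices. First it fixes a general $s$ with $Z(s)=\{p_1,\dots,p_k\}$. Then it picks $W$ already generating $E$ at every $p_i$ (an open condition, by global generation), and sets $V^\vee=\langle s\rangle\oplus W$. Now $Z(s)\cap B=\emptyset$ holds by construction, and openness in $\bP(V^\vee)$ finishes the proof.

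Your Schubert-condition count in Step 1 ($x\in D_V$ iff $\dim(V^\vee\cap W_x)\ge 2$, a codimension-two condition) is correct and arguably cleaner. An incidence count in Step 3 would give a more quantitative statement. However, the paper's trick sidesteps exactly the subtlety you trip on below.

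Your splitting argument for $H^0(E^\vee)=0$ is the paper's. Keep it rather than the ``second route'': \autoref{compdegree} is stated for good pairs, and if $H^0(E^\vee)\neq 0$ the map $V\to H^0(\det E)$ is not injective. You should also record $h^0(E)\ge n+1$, since otherwise global generation forces $E\cong\cO_X^{\oplus n}$; this makes the Grassmannian nonempty.

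\textbf{The error in Step 3.} You say $s(x)=0$ imposes $n$ conditions on $s\in V^\vee$. For $x\in D_V$ this fails by definition, because $V^\vee\to E_x$ has rank at most $n-1$ there. So the sections of $V^\vee$ vanishing at $x$ form at least a $\bP^1$, and your bound $\le n-2$ is false in general.

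For example, a general $3$-dimensional $V^\vee\subset H^0(\bP^2,\cO(1)^{\oplus 2})$ gives the quadratic Cremona map, of degree $1=c_2$. Here $D_V$ is three points, and the sections vanishing on $D_V$ fill three lines in $\bP(V^\vee)=\bP^2$, a set of dimension $1$, not $0$.

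\textbf{The correct count.} The condition $s(x)=0$ imposes $n$ conditions on $s\in H^0(E)$. Given $s\in W_x$, the condition $x\in D_V$ for $V^\vee\ni s$ reads $(V^\vee/\langle s\rangle)\cap(W_x/\langle s\rangle)\neq 0$. This is a divisor in $\Gr(n,H^0(E)/\langle s\rangle)$. Hence the incidence $\{(x,[s],V^\vee)\}$ has dimension $\dim\Gr(n+1,H^0(E))+n-1$. So for general $V^\vee$, the set of $[s]\in\bP(V^\vee)$ vanishing somewhere on $D_V$ has dimension $\le n-1<n$.

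Equivalently, stratify by $j=\dim(V^\vee\cap W_x)\ge 2$. By the same Schubert count as in your Step 1, that stratum has dimension $\le n-j(j-1)$, with fibers $\bP^{j-1}$, for a total of $\le n-(j-1)^2\le n-1$. With this correction, the rest of your Step 3 goes through.
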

	\begin{proof}
		First note that $H^0(E^\vee)=0$; indeed, if $H^0(E^\vee)\neq0$ then by global generation $\cO_X$ must be a direct summand of $E$, which implies $c_n(E)=0$. Furthermore, $h^0(E)\geq n+1$ (as $h^0(E)\leq n$ would imply $E\cong \cO_X^{\oplus n}$, again by global generation of $E$).

        It is well known that a general $V^\vee\in \Gr(n+1,H^0(E))$ generates $E$ in codimension 1. To see this, note that for a general $W\in \Gr(n,H^0(E))$ the locus $\Gamma_W$ where $W$ does not generate $E$ is generically reduced of codimension 1; fix one such $W$, and pick a general point $p\in X$ in some irreducible component of $\Gamma_W$. For a general section $s\in H^0(E)$, the subspace $W+\langle s\rangle$ generates $E$ at $p$; therefore, after doing this for finitely many points (one in each irreducible component of $\Gamma_W$), the claim follows.

        Therefore, $(E,V^\vee)$ is a good pair for a general $V^\vee\in\Gr(n+1,H^0(E))$. Moreover, since a general global section of $E$ has 0-dimensional zero locus (of length $c_n(E)$), according to \autoref{compdegree} the induced rational map $\varphi_V:X \dashrightarrow  \mathbb {P}(V^\vee)$ is dominant of degree $\leq c_n(E)$. To prove $\deg(\varphi_V)=c_n(E)$, we only need to check that for a general $V^\vee\in\Gr(n+1,H^0(E))$ and $s\in V^\vee$,  $Z(s)$ is disjoint from the (codimension $\geq2$) locus $B$ where $V^\vee$ does not generate $E$.

        Consider a general $s\in H^0(E)$, such that $Z(s)$ is a finite subscheme supported at the points $p_1,\dots, p_k\in X$. Since $E$ is globally generated, for any $i=1,\dots,k$ we can find a (non-empty) open subset $U_i\subset \mathrm{Gr}(n,H^0(E))$ such that  $W\otimes \mathcal O_X\to E$ is surjective at $p_i$ for every $W\in U_i$. Therefore taking $W\in \bigcap_{i=1}^k U_i$, we can consider $V^\vee=\langle s \rangle \oplus W$; since $s$ and $W$ are chosen general, $V^\vee$ is general in $\mathrm{Gr}(n+1,H^0(E))$. In particular $V^\vee$ generates $E$ in codimension $1$, and by construction $Z(s)$ avoids the locus $B$ where $V^\vee$ does not generate $E$. In view of \autoref{compdegree} the fiber above the class of $[s]$ is  $Z(s)\cap (X \setminus B)=Z(s)$  of degree $c_n(E)$. Since the property of vanishing along a finite subscheme which avoids $B$ is open in $\bP(V^\vee)$, the proof is complete.
 		\end{proof}

\begin{cor}\label{deg-gg}
Let $X$ be a smooth projective variety of dimension $n$, carrying a globally generated vector bundle $E$ of rank $n$ with $c_n(E)>0$. Then, $\mathrm{irr}(X)\le c_n(E)$.
\end{cor}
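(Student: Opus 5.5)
The corollary follows directly from the preceding lemma together with the definition of the degree of irrationality. The plan is to exhibit one dominant rational map $X\dashrightarrow\bP^n$ of degree exactly $c_n(E)$, and then take the minimum in the definition of $\irr(X)$.

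Concretely, I will apply the lemma to the globally generated rank-$n$ vector bundle $E$ with $c_n(E)>0$. The lemma provides a general subspace $V^\vee\in\Gr(n+1,H^0(E))$, and for such a choice the induced rational map $\varphi_V:X\dashrightarrow\bP(V^\vee)\cong\bP^n$ is generically finite of degree $c_n(E)$. The lemma's proof already checks the two inputs it needs:
\begin{itemize}
\item $h^0(E)\ge n+1$, so the Grassmannian is nonempty;
\item $(E,V^\vee)$ is a good pair, so \autoref{compdegree} applies.
\end{itemize}

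It remains to note that $\varphi_V$ is dominant. Its general fiber is $Z(s)$ for a general $s\in V^\vee$, which is a nonempty finite scheme of length $c_n(E)>0$. Hence $\varphi_V$ is generically finite between varieties of the same dimension $n$, and therefore dominant.

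Since $\irr(X)$ is the minimum of the degrees of dominant rational maps $X\dashrightarrow\bP^n$, this gives $\irr(X)\le\deg\varphi_V=c_n(E)$. There is no real obstacle here: all the substantive work, namely generation in codimension one and the avoidance of the base locus $B$ by $Z(s)$, is contained in the lemma.
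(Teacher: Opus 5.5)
Your proposal is correct and matches the paper. There the corollary is recorded with no separate proof, as an immediate consequence of the preceding lemma: a general $V^\vee\in\Gr(n+1,H^0(E))$ gives a dominant, generically finite map $\varphi_V:X\dashrightarrow\bP^n$ of degree $c_n(E)$, so $\irr(X)\le c_n(E)$ by definition. Your extra dominance check is harmless, since the lemma's proof already obtains dominance from the fiber description in \autoref{compdegree}.
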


\subsection{Picard bundles on Jacobians} Let $(A,\theta)$ be a principally polarized abelian variety, namely $\theta=c_1(L)\in\NS(A)$ for an ample line bundle $L$ with $h^0(A,L)=1$. We have the isomorphism
\[
\varphi_\theta:A\overset{\cong}{\longrightarrow} A^\vee,\;\;y\mapsto t_y^*L\otimes L^{-1}.
\]

Let $\cP_A$ denote the normalized Poincaré bundle on $A\times A^\vee$, and  let $\Phi_A:\Db(A)\to \Db(A^\vee)$ be the Fourier-Mukai equivalence with kernel $\cP_A$. Then $\Phi_A(L)$ is a line bundle on $A^\vee$, and $\widehat{L}:=\Phi_A(L)^{-1}$ defines a principal polarization $\widehat{\theta}$ on $A^\vee$ such that $\varphi_{\widehat{\theta}}$ is the inverse of $\varphi_{\theta}$. 

The following well-known lemma justifies the notation $P_y$ for the line bundle $\varphi_\theta(y)$ on $A$:

\begin{lem}
    For any $y\in A$, $\varphi_\theta^*\left(\Phi_A(k(y))\right)=t_y^*L\otimes L^{-1}(=\varphi_\theta(y))$ as line bundles on $A$.
\end{lem}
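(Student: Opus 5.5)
We have to show that $\varphi_\theta^*\Phi_A(k(y)) \cong t_y^*L\otimes L^{-1}$ as line bundles on $A$. The plan is to compute $\Phi_A(k(y))$ directly from the definition of the Fourier--Mukai transform, and then identify its pullback under $\varphi_\theta$ using the universal property of the Poincaré bundle.

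\textbf{Step 1: compute the transform.} By definition,
\[
\Phi_A(k(y))=Rp_{2*}\big(p_1^*k(y)\otimes\cP_A\big).
\]
Since $p_1^*k(y)$ is the structure sheaf of $\{y\}\times A^\vee$, and $p_2$ restricts to an isomorphism $\{y\}\times A^\vee\cong A^\vee$, there are no higher direct images. Hence
\[
\Phi_A(k(y))\cong \cP_A|_{\{y\}\times A^\vee},
\]
which is a line bundle on $A^\vee$.

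\textbf{Step 2: pull back along $\varphi_\theta$.} We need to identify $(1\times\varphi_\theta)^*\cP_A$ restricted to $\{y\}\times A$. The key fact is the standard identity
\[
(1\times\varphi_\theta)^*\cP_A\cong m^*L\otimes p_1^*L^{-1}\otimes p_2^*L^{-1}
\]
on $A\times A$, the Mumford bundle. This holds by the universal property of $\cP_A$ together with a see-saw argument:
\begin{itemize}
\item Restricting the right-hand side to $A\times\{z\}$ gives $t_z^*L\otimes L^{-1}$. By the definition of $\varphi_\theta$, this equals $\cP_A|_{A\times\{\varphi_\theta(z)\}}$.
\item Restricting the right-hand side to $\{0\}\times A$ gives $L\otimes L(0)^{-1}\otimes L^{-1}$, which is trivial. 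This matches the normalization of $\cP_A$ along $\{0\}\times A^\vee$.
\end{itemize}
The see-saw principle then gives the isomorphism.

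\textbf{Step 3: restrict to $\{y\}\times A$.} Restricting the Mumford bundle to $\{y\}\times A$ gives $t_y^*L\otimes L^{-1}$, up to the trivial factor $L(y)^{-1}$. Combining with Step 1,
\[
\varphi_\theta^*\Phi_A(k(y))\cong \varphi_\theta^*\big(\cP_A|_{\{y\}\times A^\vee}\big)\cong t_y^*L\otimes L^{-1}.
\]
By the definition of $\varphi_\theta$, this is exactly $\varphi_\theta(y)$. Note that this step also uses the symmetry of the Mumford bundle in its two factors, which is what lets us restrict to the first factor rather than the second.

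The only point requiring care is the see-saw identification in Step 2, in particular keeping track of the normalization of $\cP_A$. Everything else is base change along a point, so I expect no other obstacle.
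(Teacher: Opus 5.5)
Your argument is correct and complete. The paper gives no proof of this lemma (it is quoted as well known), so there is nothing to compare against.

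What you wrote is the standard argument: it is essentially the proof that $\varphi_\theta$ is self-dual under the identification $A\cong (A^\vee)^\vee$, $y\mapsto \cP_A|_{\{y\}\times A^\vee}$. The logic is sound:
\begin{itemize}
\item You pin down $(1\times\varphi_\theta)^*\cP_A$ as the Mumford bundle $m^*L\otimes p_1^*L^{-1}\otimes p_2^*L^{-1}$. This uses the slices $A\times\{z\}$ together with the normalization along $\{0\}\times A$, via the see-saw principle.
\item You then read off the answer on the transverse slices $\{y\}\times A$. There $m$ restricts to $t_y$ and $p_1^*L^{-1}$ restricts to a trivial bundle.
\end{itemize}

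Two small remarks.
\begin{itemize}
\item The appeal to ``symmetry of the Mumford bundle'' in Step 3 is not needed. The restriction to $\{y\}\times A$ is computed directly from $m|_{\{y\}\times A}=t_y$.
\item It is worth noting why no inversion appears. The kernel of $\Phi_A$ is $\cP_A$ itself, not $\cP_A^{-1}$, so Step 1 identifies $\Phi_A(k(y))$ with $\cP_A|_{\{y\}\times A^\vee}$ on the nose. Your computation therefore confirms that no $\iota^*$ appears in this statement. This is a useful sanity check, given how carefully the paper has to track $\iota$ elsewhere, for instance in the definition of $F$ and in the footnote comparing with Mukai's conventions.
\end{itemize}
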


If $A=JC$ is the Jacobian of a smooth projective curve $C$ of genus $g$, the above admits the following incarnation. We fix a point $p_0\in C$ and the corresponding Abel--Jacobi embedding
\[
    a:C\hookrightarrow JC,\;\;p\mapsto \cO_C(p-p_0),
\]
and we consider $L$ to be the line bundle represented by the divisor $\Theta:=C+\overset{g-1}{\dots}+C$. Then Jacobi inversion theorem asserts that $\varphi_{\widehat{\theta}}=\iota\circ a^*$, where $a^*:\Pic^0(JC)\to \Pic^0(C)=JC$ is the restriction map, and $\iota:JC\to JC$ denotes the natural inversion map. Given a point $\alpha\in JC$, we will denote by $\Theta_\alpha:=\Theta+\alpha$ the translated theta divisor.

The main character will be the rank-$g$ \emph{Picard bundle} $F:=\iota^*\varphi_\theta^*\left(R^1\Phi_{JC}(a_*\cO_C(-p_0))\right)$, whose fiber over a point $\alpha\in JC$ is canonically identified with
\[
H^1\left(JC,a_*\cO_C(-p_0)\otimes P_\alpha^{-1}\right)=H^1\left(C,\cO_C(-p_0)\otimes\alpha\right).
\]
Note that $F$ equals the vector bundle $F_{-1}$ defined in \cite[Section 2]{schw}; in particular, one has $\det(F)=\cO_{JC}(\Theta)$ as proven in \cite{mattuck,schw}. Moreover, for any $\alpha\in JC$
\begin{equation}\label{cohomologypicard}
h^i(JC,F\otimes P_\alpha)=\left\{
    \begin{array}{c l}
     \binom{g-1}{i} & \text{if $0\leq i\leq g-1$ and $\alpha\in C$}\\
     0 &  \text{if $i=g$ or $\alpha \notin C$.}\\
    \end{array}
    \right.
\end{equation}
(see for instance \cite[Proposition 4.4]{mukai-duality}\footnote{$\iota^*F$ equals the vector bundle  denoted by $F_{-1}$ in \cite{mukai-duality}. It is claimed in \emph{loc. cit} that the latter also coincides with the vector bundle $F_{-1}$ of \cite{schw}, but this holds only up to the action of $\iota$, due to Jacobi inversion.}).
It follows that the Picard bundle $F$ is a GV-sheaf in the sense of \cite{hacon,parpopaGV}. This implies that $F(\Theta)$ is $IT_0$ (the Index Theorem holds with index 0, in Mukai's language), in particular
\[
h^0(F(\Theta))=\chi(F(\Theta))=2g.
\]

\begin{rem} We could also consider the Picard bundle  $F_M:=\iota^*\varphi_\theta^*\left(R^1\Phi_{JC}(a_*M)\right)$ for any $M\in\Pic^{-1}(C)$; this is just the translate $t_{M(p_0)}^*F$. The analysis of \autoref{sec:positivity} remains valid (with appropriate changes) for the bundles $F_M(\Theta)$ and $\pi^*F_M(x)$, and the resulting  maps $JC\dasharrow \bP^g$ differ from ours just by composition with a translation on $JC$. 
\end{rem}

\section{Positivity of (twisted) Picard bundles}\label{sec:positivity}

In this section we will also consider the $g$-fold symmetric product $C^{(g)}$, together with the birational morphism
\[
\pi:C^{(g)}\longrightarrow JC,\qquad p_1+...+p_g\longmapsto \cO_C(p_1+...+p_g-gp_0).
\]
In $C^{(g)}$ we have the divisors $\pi^*\Theta$ and $x_p:=p+C^{(g-1)}\subset C^{(g)}$ for any $p\in C$, the latter ones being ample (see \cite[Proof of VII.2.2]{arbarello2013geometry}); for simplicity, we set $x:=x_{p_0}$.
Furthermore, the canonical divisor of $C^{(g)}$ coincides with the exceptional divisor of $\pi$, namely:
\[
K_{C^{(g)}}:=\big\{p_1+...+p_g\in C^{(g)}:\;h^0(C,\cO_C(p_1+...+p_g))\geq2\big\}
\]

The following lemma is well known; we add some details as we could not find a reference.

\begin{lem}\label{can_Cg}
    For every $\alpha=\cO_C(p-p_0)\in C\subset JC$, there is an equality $K_{C^{(g)}}+x_p=\pi^*(\Theta_\alpha)$ as divisors.
\end{lem}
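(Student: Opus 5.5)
We prove the equality set-theoretically and then check multiplicities. Both sides are effective divisors on $C^{(g)}$. First we identify the translated theta divisor. With $\Theta=C+\overset{g-1}{\dots}+C=W_{g-1}-(g-1)p_0$, the translate $\Theta_\alpha=\Theta+\alpha$ for $\alpha=\cO_C(p-p_0)$ consists of the classes $\cO_C(q_1+\dots+q_{g-1}+p-gp_0)$. Hence
\[
\pi^{-1}(\Theta_\alpha)=\{D\in C^{(g)}:\ h^0(\cO_C(D-p))\geq 1\}.
\]
If $p\in D$, then $D$ lies in $x_p$. If $p\notin D$ and $h^0(D-p)\ge1$, then $D\sim D'+p$ with $D'\neq D-p$, so $h^0(D)\ge2$ and $D\in K_{C^{(g)}}$. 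Conversely, points of $x_p$ clearly lie in $\pi^{-1}(\Theta_\alpha)$. Points of $K_{C^{(g)}}$ also do: if $h^0(D)\geq 2$, some member of $|D|$ contains $p$. So the supports agree:
\[
\pi^{-1}(\Theta_\alpha)=K_{C^{(g)}}\cup x_p .
\]

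Next we compare divisor classes. Via the Abel--Jacobi map $u_p\colon C^{(g-1)}\to JC$ attached to $p$, pulling back along $\pi$ gives $\pi^*\theta = x+\ldots$; more cleanly, we use the standard formula on $C^{(g)}$: $K_{C^{(g)}}\equiv (g-1)x... $. The paper takes it as known that $K_{C^{(g)}}$ is the exceptional divisor. We prefer to use the classical relation $K_{C^{(d)}}\equiv \pi^*\theta-(d-g+1)x$ (Macdonald / Arbarello--Cornalba--Griffiths--Harris, Ch. VII). For $d=g$ it gives $K_{C^{(g)}}\equiv \pi^*\theta-x$. Since $x_p\equiv x$ and $\Theta_\alpha\equiv\Theta$, the classes of $K_{C^{(g)}}+x_p$ and $\pi^*\Theta_\alpha$ agree numerically.

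This reduces everything to multiplicities, which we handle through the irreducible components. The divisor $x_p\cong C^{(g-1)}$ is irreducible. The divisor $K_{C^{(g)}}$ is the preimage of $W^1_g$ and is irreducible, being a $\bP^1$-bundle over $W^1_g$ generically; it is reduced as the exceptional divisor of $\pi$. Moreover $x_p\not\subset K_{C^{(g)}}$. Write $\pi^*\Theta_\alpha=a\,K_{C^{(g)}}+b\,x_p$ with $a,b\ge1$.

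To get $b=1$, we take a general point $D=p+q_2+\dots+q_g$ with $h^0(D)=1$. Near $D$ the map $\pi$ is a local isomorphism, and $\Theta_\alpha$ is reduced at $\pi(D)$: indeed $W_{g-1}$ is smooth at the point $q_2+\dots+q_g$, which has $h^0=1$. Hence $b=1$.

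To get $a=1$, we use numerical equivalence. It gives $(a-1)K_{C^{(g)}}\equiv 0$. But $K_{C^{(g)}}$ is a nonzero effective divisor, and it has positive intersection with the ample class $x^{g-1}$. Therefore $a=1$.

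The main obstacle is justifying the canonical class formula $K_{C^{(g)}}\equiv\pi^*\theta-x$. The alternative is to compute the multiplicity of $\pi^*\Theta_\alpha$ along the exceptional divisor directly. By Riemann's singularity theorem this amounts to showing that $\Theta_\alpha$ is smooth at a general point of $\pi(K_{C^{(g)}})$, which is a codimension-$2$ locus, and that $\pi$ contracts with the correct discrepancy. We would cite ACGH for the class formula.
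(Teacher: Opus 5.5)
Your argument is correct and is essentially the paper's: both show that $K_{C^{(g)}}$ and $x_p$ are components of $\pi^*(\Theta_\alpha)$, then use the canonical class formula $K_{C^{(g)}}\equiv \pi^*\theta - x$ (the paper cites Polishchuk's Lemma 3.3 for it) to see that the effective residual is numerically trivial, hence zero. Your separate local check that $b=1$ is harmless but unnecessary, since intersecting the effective divisor $(a-1)K_{C^{(g)}}+(b-1)x_p\equiv 0$ with the ample class $x^{g-1}$ already forces $a=b=1$; also delete the aborted formula $K_{C^{(g)}}\equiv (g-1)x\ldots$ in your second paragraph.
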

\begin{proof}
It is immediate to check that $\pi^*(\Theta_\alpha)$ equals the divisor
\[
\big\{p_1+...+p_g\in C^{(g)}:\;h^0(C,\cO_C(p_1+...+p_g-p))\geq1\big\},
\]
which clearly contains both $K_{C^{(g)}}$ and $x_p$; therefore, we can write $\pi^*(\Theta_\alpha)=K_{C^{(g)}}+x_p+E$ for some effective divisor $E$. But since $\cO_{C^{(g)}}(\pi^*\Theta)\cong \omega_{C^{(g)}}\otimes \cO_{C^{(g)}}(x)$ (see \cite[Lemma 3.3]{polishchuk}), it turns out that $\pi^*(\Theta_\alpha)$ is numerically equivalent to $K_{C^{(g)}}+x_p$, which implies $E=0$.
\end{proof}

With a slight abuse of notation, we will say that $K_{C^{(g)}}+x_\alpha=\pi^*(\Theta_\alpha)$ for every $\alpha \in C$.

\vspace{2mm}

Consider the vector bundle $\widetilde{F}:=(\pi^*F)(x)$ on $C^{(g)}$. Note that $\det(\widetilde{F})=\cO_{C^{(g)}}(\pi^*\Theta+gx)$. The first goal of this section is to study the image of the determinant maps
\begin{align*}
\mathrm{det}_1:\bigwedge^g H^0\!\big(JC,F(\Theta)\big) \longrightarrow H^0\!\big(JC,\cO_{JC}((g+1)\Theta)\big), \\
\mathrm{det}_2:\bigwedge^g H^0\!\big(C^{(g)},\widetilde{F}\big) \longrightarrow H^0\!\big(C^{(g)},\cO_{C^{(g)}}(\pi^*\Theta+g x)\big).
\end{align*}
In particular, we will show that $\mathrm{Im}(\det_2)$ defines a base point free linear system, from which we will deduce that $\widetilde{F}$ is globally generated.

\medskip

To perform this study, observe that in virtue of \eqref{cohomologypicard}, for every $\alpha\in C$ we can consider a section $s_\alpha\in H^0(JC,F(\Theta))$, unique up to scalar, corresponding to the canonical inclusion $H^0(JC,F\otimes P_\alpha)\subset H^0(JC,F(\Theta))$ defined by $\Theta_\alpha$. In particular
\[
\Theta_\alpha\subset Z(s_\alpha),
\]
so it follows from \autoref{can_Cg} that $\pi^*s_\alpha\in H^0\left(C^{(g)},\pi^*(F(\Theta))\right)$ vanishes along the canonical divisor $K_{C^{(g)}}$ of $C^{(g)}$. Since $\widetilde{F}=\pi^*(F(\Theta))(-K_{C^{(g)}})$, $\pi^*s_\alpha$ induces a section $\widetilde{s_\alpha}\in H^0(C^{(g)},\widetilde{F})$ such that $x_\alpha\subset Z(\widetilde{s_\alpha})$.

Building on the generation properties studied in \cite{pareschi-generation}, we can now establish the following:

\begin{prop}\label{prop:decomposable}
The following hold.

\begin{enumerate}
\item\label{prop:decomp1} The linear system $|V|:=\!\bP\;\mathrm{Im}\big(\mathrm{det}_1\big) \subset |(g+1)\Theta|$ on $JC$
contains the divisor
\[
\Theta_{\alpha_1}+\Theta_{\alpha_2}+\cdots+\Theta_{\alpha_g}+\Theta_{\iota(\alpha_1+...+\alpha_g)}
\]
for every $\alpha_1,\dots,\alpha_g\in C$.

\vspace{2mm}

\item\label{prop:decomp2} The linear system $|W|:=\!\bP\;\mathrm{Im}\big(\mathrm{det}_2\big) \subset |\pi^*\Theta+g x|$ on $C^{(g)}$
contains the divisor
\[
x_{\alpha_1}+ x_{\alpha_2}+\cdots+ x_{\alpha_g}+ \pi^*\!(\Theta_{\iota(\alpha_1+\cdots+\alpha_g)})
\]
for every $\alpha_1,\dots,\alpha_g\in C$.
\end{enumerate}
\end{prop}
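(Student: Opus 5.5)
The plan is to take the $g$ distinguished sections $s_{\alpha_1},\dots,s_{\alpha_g}\in H^0(JC,F(\Theta))$ and compute the divisor of
\[
\det_1(s_{\alpha_1}\wedge\cdots\wedge s_{\alpha_g})\in H^0(JC,\cO_{JC}((g+1)\Theta)).
\]
Part (2) will then follow by pulling back to $C^{(g)}$.

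\textbf{The theta components.} Since $\Theta_{\alpha_i}\subset Z(s_{\alpha_i})$, the wedge product vanishes along each $\Theta_{\alpha_i}$. More precisely, $s_{\alpha_i}$ is the image of a nonzero section $t_i\in H^0(F\otimes P_{\alpha_i})$ under multiplication by the equation of $\Theta_{\alpha_i}$. Hence
\[
s_{\alpha_1}\wedge\cdots\wedge s_{\alpha_g}=\Big(\prod_i \theta_{\alpha_i}\Big)\cdot (t_1\wedge\cdots\wedge t_g),
\]
where $t_1\wedge\cdots\wedge t_g$ is a section of
\[
\det F\otimes \bigotimes_i P_{\alpha_i}=\cO_{JC}(\Theta)\otimes P_{\alpha_1+\cdots+\alpha_g}.
\]
This requires checking that the $\Theta_{\alpha_i}$ sum correctly even when some $\alpha_i$ coincide. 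That holds because we factor the equations of the divisors, not their supports.

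\textbf{The residual divisor.} The residual section $t_1\wedge\cdots\wedge t_g$ lives in a line bundle algebraically equivalent to $\cO(\Theta)$, namely a translate of $\Theta$. Since $h^0=1$ for any such translate, if the section is nonzero its zero divisor is the unique divisor in that class. Matching the translation conventions ($P_y=t_y^*L\otimes L^{-1}$, so that $\cO(\Theta)\otimes P_\beta\cong\cO(\Theta_{\pm\beta})$), this divisor should be $\Theta_{\iota(\alpha_1+\cdots+\alpha_g)}$. I would confirm the sign by a degree/translation check, or by comparing with the theorem of the square: the total divisor must lie in $|(g+1)\Theta|$, and $\sum\Theta_{\alpha_i}+\Theta_\beta\equiv(g+1)\Theta$ linearly forces $\beta=-\sum\alpha_i$.

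\textbf{Main obstacle: nonvanishing of $t_1\wedge\cdots\wedge t_g$.} This is the step where generation results from \cite{pareschi-generation} enter. For general distinct $\alpha_i$, I would argue that at a general point $y\in JC$ the vectors $t_i(y)\in F|_y\otimes P_{\alpha_i}|_y\cong H^1(C,\cO_C(-p_0)\otimes y)$ are linearly independent. By Serre duality, $t_i(y)$ should correspond to the functional given by the point $p_i$ with $\alpha_i=\cO(p_i-p_0)$, via the coboundary of
\[
0\to\cO(-p_0)\otimes y\to \cO(p_i-p_0)\otimes y\to k(p_i)\to 0.
\]
Indeed, $t_i$ is nonzero exactly where $h^0(\cO(p_i-p_0)\otimes y)=0$, i.e.\ off $\Theta_{\alpha_i}$ up to sign, which confirms this description.

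Independence of these $g$ vectors then amounts to the $g$ points $p_i$ imposing independent conditions on $H^0(\omega_C(p_0)\otimes y^{-1})$. For general $y$ this space has dimension $g$ and is a general twist, so general points impose independent conditions. The nonvanishing then extends to all $\alpha_i\in C$ by a closedness argument: the set of $(\alpha_1,\dots,\alpha_g)$ for which the divisor belongs to $|V|$ is closed, because $|V|$ is a closed linear subspace and the divisor varies algebraically in $(\alpha_i)$. Alternatively, one can check directly that the residual section is nonzero for every choice.

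\textbf{Part (2).} Pull back via $\pi$. Since $\pi^*\Theta_{\alpha_i}=K_{C^{(g)}}+x_{\alpha_i}$ by \autoref{can_Cg}, and $\widetilde F=\pi^*(F(\Theta))(-K_{C^{(g)}})$, we get $\widetilde{s_{\alpha_i}}=\pi^*s_{\alpha_i}/\kappa$, where $\kappa$ is the equation of $K_{C^{(g)}}$. Therefore
\[
\det_2\big(\widetilde{s_{\alpha_1}}\wedge\cdots\wedge\widetilde{s_{\alpha_g}}\big)=\pi^*\det_1(\cdots)/\kappa^g,
\]
whose divisor is
\[
\sum_i x_{\alpha_i}+\pi^*\Theta_{\iota(\alpha_1+\cdots+\alpha_g)}.
\]
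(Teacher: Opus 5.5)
Your skeleton matches the paper's proof. You factor out the equations of the $\Theta_{\alpha_i}$, note that the residual lies in a translate of $\Theta$ with $h^0=1$, pass to arbitrary $\alpha_i$ by closedness of $|V|$, and obtain (2) by pulling back and dividing by $\kappa^g$ via \autoref{can_Cg}.

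\textbf{Where you differ: the nonvanishing step.} The paper proves $\det_1(s_{\alpha_1}\wedge\cdots\wedge s_{\alpha_g})\neq 0$ using \cite[Theorem D]{pareschi-generation}. That theorem gives the surjection $\bigoplus_{\alpha\in C}H^0(F\otimes P_\alpha)\otimes\cO_{JC}(\Theta_\alpha)\to F(\Theta)$, so $g$ general $s_{\alpha_i}$ are independent at a general point. You instead identify $t_i(y)$ with the coboundary of $0\to\cO_C(-p_0)\otimes y\to\cO_C(p_i-p_0)\otimes y\to k(p_i)\to 0$. You then reduce to $h^0(\omega_C(p_0-p_1-\cdots-p_g)\otimes y^{-1})=0$ for general $y$, which holds because this is a general line bundle of degree $g-1$.

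\textbf{What each route buys.} Your argument is self-contained and actually stronger:
\begin{itemize}
\item It works for every choice of pairwise distinct $p_i$, not only general ones.
\item By Serre duality, the degeneracy locus of $t_1,\dots,t_g$ is exactly $\{y:\ h^0(\cO_C(p_1+\cdots+p_g-p_0)\otimes y)>0\}=\Theta_{\iota(\alpha_1+\cdots+\alpha_g)}$. This identifies the residual divisor and its sign directly, with no appeal to the theorem of the square.
\end{itemize}
The price is that you must justify that the unique map $P_{\alpha_i}^{-1}\to F$ is the coboundary map. Equivalently, the twisted Poincaré bundle restricted to $\{p_i\}\times JC$ must be $P_{\alpha_i}^{-1}$ under the paper's normalization of $F$. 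This does check out, but it should be said rather than asserted with ``should correspond''. The paper's citation avoids this computation, and the same ``generated by $\{C\}$'' formalism is what it reuses for Prym varieties.

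\textbf{Two slips to fix.}
\begin{itemize}
\item Your ``confirmation'' is wrong. The bundle $\cO_C(p_i-p_0)\otimes y$ has degree $0$, so it has sections only at the single point $y=\iota(\alpha_i)$. Hence $t_i$ vanishes at one point, as it must since $c_g(F)$ is the class of a point, and not along a theta divisor. If $t_i$ vanished along a translate of $\Theta$, then $t_1\wedge\cdots\wedge t_g$, a section of a line bundle of class $\theta$, would vanish on $g$ translates of $\Theta$, contradicting the very nonvanishing you want. The divisor $\Theta_{\alpha_i}$ comes from the factor $\theta_{\alpha_i}$, not from $t_i$.
\item Your alternative, ``check directly that the residual section is nonzero for every choice'', fails when two $\alpha_i$ coincide. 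Then $t_i$ and $t_j$ are proportional and the wedge is identically zero. For repeated points the closedness argument is genuinely needed, and it transports membership of the divisor in $|V|$, not nonvanishing.
\end{itemize}
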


\begin{proof}
By \cite[Theorem D]{pareschi-generation}\footnote{With the notations in \cite{pareschi-generation}, our vector bundle $F$ equals the \emph{naive FMP transform} $\varphi_\theta^*\mathcal{T}(\omega_C(p_0))$.} the Picard bundle $F$ is \emph{generated by $\{C\}$}, namely the evaluation map
\[
\bigoplus_{\alpha\in U}H^0(JC,F\otimes P_\alpha)\otimes P_\alpha^{-1} \longrightarrow F
\]
is surjective for every open subset $U\subset JC$ intersecting $C$. Since $H^0(JC,F\otimes P_\alpha)=0$ for $\alpha\notin C$ by \eqref{cohomologypicard}, twisting by $\Theta$ results in a surjection
\begin{equation}\label{ev-surj}
    \bigoplus_{\alpha\in C}H^0(JC,F\otimes P_\alpha)\otimes \cO_{JC}(\Theta_\alpha) \longrightarrow F(\Theta).
\end{equation}
Recall that by construction $s_\alpha$ spans the image of
\[
H^0(JC,F\otimes P_\alpha)\otimes H^0\big(\cO_{JC}(\Theta_\alpha)\big)\longrightarrow H^0\!\big(F(\Theta)\big).
\]
Therefore, by surjectivity of \eqref{ev-surj}, for general points $\alpha_1,\dots,\alpha_g\in C$ the sections $s_{\alpha_1},\dots,s_{\alpha_g}\in H^0(JC,F(\Theta))$ are linearly independent over an open subset of $JC$; in other words, their determinant
$\det_1(s_{\alpha_1}\wedge\cdots\wedge s_{\alpha_g})\ \in\ H^0\!\big(\cO_{JC}((g+1)\Theta)\big)$
is nonzero. 

Since $Z(s_{\alpha_i})$ contains $\Theta_{\alpha_i}$, the inclusion
\[\Theta_{\alpha_1}+\cdots+\Theta_{\alpha_g}\subset Z\big(\mathrm{det} _1(s_{\alpha_1}\wedge\cdots\wedge s_{\alpha_g})\big)
\]
holds.
Moreover, the residual line bundle
\[
\cO_{JC}((g+1)\Theta)\otimes \cO_{JC}(-\Theta_{\alpha_1})\otimes\cdots\otimes \cO_{JC}(-\Theta_{\alpha_g})
\cong \cO_{JC}\!\big(\Theta_{\iota(\alpha_1+\cdots+\alpha_g)}\big)
\]
has a unique (up to scalar) nonzero global section, hence
\[
Z\big(\mathrm{det} _1(s_{\alpha_1}\wedge\cdots\wedge s_{\alpha_g})\big)
=\Theta_{\alpha_1}+\Theta_{\alpha_2}+\cdots+\Theta_{\alpha_g}+\Theta_{\iota(\alpha_1+...+\alpha_g)}
\]
for general $\alpha_1,\dots,\alpha_g\in C$. Since $|V|=\bP\;\mathrm{Im}(\det_1)$ is Zariski closed in $|(g+1)\Theta|$, it follows that the same divisor belongs to $|V|$ for all $\alpha_1,\dots,\alpha_g\in C$, which proves \eqref{prop:decomp1}.

Statement \eqref{prop:decomp2} is analogous. Indeed, one immediately checks that if $\det_1(s_{\alpha_1}\wedge\cdots\wedge s_{\alpha_g})\neq0$, then $\det_2(\widetilde{s_{\alpha_1}}\wedge\cdots\wedge \widetilde{s_{\alpha_g}})\neq0$
and moreover
\[
Z\!\big(\mathrm{det}_2(\widetilde{s_{\alpha_1}}\wedge\cdots\wedge \widetilde{s_{\alpha_g}})\big)
=
\pi^*Z\!\big(\mathrm{det}_1(s_{\alpha_1}\wedge\cdots\wedge s_{\alpha_g})\big)-g\cdot K_{C^{(g)}}.
\]
Using that $\pi^*(\Theta_{\alpha_i})=K_{C^{(g)}}+ x_{\alpha_i}$ (by \autoref{can_Cg}), it follows that 
\[
Z\!\big(\mathrm{det}_2(\widetilde{s_{\alpha_1}}\wedge\cdots\wedge \widetilde{s_{\alpha_g}})\big)
=x_{\alpha_1}+ x_{\alpha_2}+\cdots+ x_{\alpha_g}+ \pi^*\!(\Theta_{\iota(\alpha_1+\cdots+\alpha_g)})
\]
for general $\alpha_1,\dots,\alpha_g\in C$, and hence again the same divisor belongs to $|W|$ for all choices of $\alpha_1,\dots,\alpha_g\in C$, which finishes the proof.
\end{proof}

The main consequence of \autoref{prop:decomposable} is the global generation of $\widetilde{F}$:

\begin{prop}\label{prop:bp-free}
The linear system
$|W| \subset |\pi^*\Theta+g x|$
is base point free. In particular, $\widetilde{F}$ is globally generated.
\end{prop}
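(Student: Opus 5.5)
The plan is to show directly that, for every point $D=p_1+\dots+p_g\in C^{(g)}$, some divisor of the form described in \autoref{prop:decomposable}\eqref{prop:decomp2} avoids $D$. Since all these divisors belong to $|W|$, this gives base point freeness. Global generation of $\widetilde{F}$ then follows formally.

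\emph{Unwinding the divisors.} Write $\alpha_i=\cO_C(q_i-p_0)$ with $q_i\in C$.
\begin{itemize}
\item[(i)] By definition $x_{\alpha_i}=x_{q_i}=q_i+C^{(g-1)}$. Hence $D\notin x_{\alpha_i}$ if and only if $q_i\notin\{p_1,\dots,p_g\}$.
\item[(ii)] For the last summand, set $\beta=\iota(\alpha_1+\dots+\alpha_g)=\cO_C\big(gp_0-\sum q_i\big)$. Using $\Theta=\{\cO_C(E-(g-1)p_0): E\ge 0,\ \deg E=g-1\}$, we have $D\in\pi^*(\Theta_\beta)$ if and only if $\cO_C(D-gp_0)\otimes\beta^{-1}$ is of the form $\cO_C(E-(g-1)p_0)$.
\item[(iii)] Concretely, this means
\[
h^0\big(C,\cO_C(D+q_1+\dots+q_g-(g+1)p_0)\big)\ge 1 .
\]
\end{itemize}

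\emph{Choosing the points.} Fix $D$ and put $M:=\cO_C(D-(g+1)p_0)\in\Pic^{-1}(C)$.
\begin{itemize}
\item[(a)] By Jacobi inversion the map $C^{(g)}\to\Pic^{g-1}(C)$, $q_1+\dots+q_g\mapsto M(q_1+\dots+q_g)$, is surjective.
\item[(b)] The locus of effective classes $W_{g-1}\subset\Pic^{g-1}(C)$ is a proper divisor (a translate of $\Theta$).
\item[(c)] Therefore, for $(q_1,\dots,q_g)$ in a nonempty open subset of $C^g$, $M(q_1+\dots+q_g)$ is not effective.
\item[(d)] Shrinking this open set, we may also assume no $q_i$ lies in the support of $D$.
\end{itemize}
For such a choice, (i) and (iii) show that $D$ lies on none of the components of
\[
x_{\alpha_1}+\dots+x_{\alpha_g}+\pi^*\big(\Theta_{\iota(\alpha_1+\cdots+\alpha_g)}\big).
\]
This divisor is in $|W|$ by \autoref{prop:decomposable}. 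Since $D$ was arbitrary, $|W|$ is base point free.

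\emph{Global generation.} Let $D\in C^{(g)}$. Base point freeness provides $t_1,\dots,t_g\in H^0(C^{(g)},\widetilde{F})$ such that $\det_2(t_1\wedge\dots\wedge t_g)$ does not vanish at $D$. The value of $\det_2(t_1\wedge\cdots\wedge t_g)$ at $D$ is the determinant of $t_1(D),\dots,t_g(D)$ in the rank-$g$ fiber $\widetilde{F}|_D$. Nonvanishing therefore means that these $g$ vectors form a basis of $\widetilde{F}|_D$. Hence the evaluation map $H^0(\widetilde{F})\otimes\cO\to\widetilde{F}$ is surjective at $D$.

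\emph{Main obstacle.} The only delicate step is the translation in (ii)–(iii): one must track the sign conventions for $\iota$ and the normalization of $\Theta$ correctly, so that the non-effectivity condition lands in degree $g-1$, where Jacobi inversion applies. Once that is checked, the argument is a genericity statement.
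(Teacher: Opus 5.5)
Your argument is correct and is exactly the ``routine verification'' the paper leaves to the reader. For a given $D$ you choose $\alpha_i=\cO_C(q_i-p_0)$ with $q_i\notin\mathrm{supp}(D)$ and, by Jacobi inversion, with $\cO_C(D+q_1+\dots+q_g-(g+1)p_0)$ non-effective, so that $D$ avoids the divisor of \autoref{prop:decomposable}\eqref{prop:decomp2}; global generation then follows from the fiberwise meaning of the determinant. The step producing some $\det_2(t_1\wedge\cdots\wedge t_g)$ nonzero at $D$ is fine, because $\mathrm{Im}(\det_2)$ is spanned by determinants of decomposable tensors.
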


\begin{proof}
It is routine to verify that the family of divisors exhibited in \autoref{prop:decomposable}.\eqref{prop:decomp2} is base point free on $C^{(g)}$.
\end{proof}

Now we turn our attention to the study of the morphism
\[
j:C\longrightarrow \bP H^0(F(\Theta))=\bP^{2g-1},\;\;\;\;\alpha\mapsto[s_\alpha]=H^0(F\otimes P_\alpha)\otimes H^0(\cO_{JC}(\Theta_\alpha)).
\]

\vspace{2mm}

\begin{prop}\label{prop:morphismj}
The morphism $j$ is the closed immersion defined by the complete linear system $|\omega_C((g+1)p_0)|$ on $C$. 
\end{prop}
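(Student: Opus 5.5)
The plan is to identify the line bundle $j^*\cO_{\bP^{2g-1}}(1)$ on $C$, check that it is $\omega_C((g+1)p_0)$, and then compare dimensions. Note first that $\deg \omega_C((g+1)p_0)=3g-1\ge 2g+1$ for $g\ge 2$. Hence this line bundle is very ample, and $h^0(C,\omega_C((g+1)p_0))=3g-1-g+1=2g=h^0(JC,F(\Theta))$. So it suffices to prove three things:
\begin{enumerate}
\item $j^*\cO(1)\cong \omega_C((g+1)p_0)$;
\item $j(C)$ is nondegenerate in $\bP H^0(F(\Theta))$;
\item the induced map $H^0(F(\Theta))^\vee\to H^0(C,j^*\cO(1))$ is injective.
\end{enumerate}
Item (3) follows from (2), and then it is an isomorphism by the dimension count. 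Thus $j$ is the map given by the complete linear system, which is a closed immersion by very ampleness.

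Nondegeneracy is immediate from the surjectivity of \eqref{ev-surj}. If a linear functional on $H^0(F(\Theta))$ vanished on every $s_\alpha$, then the $s_\alpha$ would span a proper subspace. But the images of $s_\alpha\otimes \cO(\Theta_\alpha)$ generate $F(\Theta)$, and together with $h^0(F(\Theta))=2g$ and the $IT_0$ property this should force the $s_\alpha$ to span all of $H^0(F(\Theta))$. Concretely, I would argue via the Fourier--Mukai transform: $H^0(F(\Theta))^\vee$ is identified with a fiber of $\widehat{F(\Theta)}$, and a functional killing all the $s_\alpha$ would give a section supported away from $C$, contradicting \eqref{cohomologypicard}.

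For the line bundle itself, the tautological line $[s_\alpha]=H^0(F\otimes P_\alpha)\otimes H^0(\cO_{JC}(\Theta_\alpha))$ shows that
\[
j^*\cO(-1)\cong \mathcal{M}\otimes \mathcal{N}.
\]
Here $\mathcal{N}$ is the line bundle on $C$ with fibers $H^0(\cO_{JC}(\Theta_\alpha))$, namely $p_{2*}$ of the pullback of $\cO(\Theta)$ under $(y,\alpha)\mapsto y-\alpha$ on $JC\times C$. By the seesaw principle and the standard formula $p_{2*}m^*L\cong \widehat{L}^{-1}$ up to a trivial factor, this is $\cO_{JC}(-\Theta)|_C$ twisted by $\iota$, of degree $-g$. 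Similarly, $\mathcal{M}$ has fibers $H^0(F\otimes P_\alpha)$. By base change, Mukai duality and \eqref{cohomologypicard}, I expect $\mathcal{M}$ to be computable as the restriction to $C$ (the support of the transform of $F$) of the relevant Fourier--Mukai transform. That transform is essentially $a_*$ of a line bundle built from $\omega_C$ and $p_0$, which I would guess is $\omega_C^{-1}(-p_0)$ up to a term coming from $\cO(\Theta)|_C$.

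The main obstacle is pinning down $\mathcal{M}$ exactly, with the correct signs and the correct dependence on $p_0$ and on $\iota$. To do this I would first compute $\deg j^*\cO(1)$ independently. One way is to intersect $j(C)$ with a hyperplane: for instance, use the divisors of \autoref{prop:decomposable} to count the $\alpha$ for which $s_\alpha$ lies in a fixed hyperplane $\langle s_{\alpha_1},\dots,s_{\alpha_{2g-1}}\rangle$. This count should give $3g-1$. I would then fix the isomorphism class by varying $p_0$ and using the Remark on $F_M$: replacing $\cO_C(-p_0)$ by $M$ translates everything by $t_{M(p_0)}$, so the answer must have the shape $\omega_C\otimes(\text{degree } g+1 \text{ bundle depending linearly on } p_0 \text{ and } M)$. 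Combined with the fiberwise description, this should force $\omega_C((g+1)p_0)$.
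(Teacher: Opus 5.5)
Your reduction is the same as the paper's: identify $j^*\cO(1)$ through the line bundles on $C$ with fibres $H^0(F\otimes P_\alpha)$ and $H^0(\cO_{JC}(\Theta_\alpha))$, prove non-degeneracy, then conclude from $h^0(\omega_C((g+1)p_0))=2g$ and very ampleness. However, neither substantive step is actually proved, and the argument you give for non-degeneracy fails.

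\textbf{Non-degeneracy.} Surjectivity of \eqref{ev-surj} says that the span $S$ of the $s_\alpha$ generates the sheaf $F(\Theta)$. It says nothing about $S$ being all of $H^0(F(\Theta))$, since a proper subspace can generate $F(\Theta)$ generically just as well. The appeal to $IT_0$ and to ``a section supported away from $C$'' does not supply the missing step. Nor can a zero-locus argument be run directly on $JC$. By \autoref{can_Cg} every $\Theta_\alpha$ contains $\pi(K_{C^{(g)}})\neq\emptyset$, so all $s_\alpha$ vanish there, and $S$ fails to generate $F(\Theta)$ along this codimension-$2$ locus. The idea you are missing is to pass to $C^{(g)}$:
\begin{itemize}
\item by the proofs of \autoref{prop:decomposable} and \autoref{prop:bp-free}, the sections $\widetilde{s_\alpha}$ generate $\widetilde F$ \emph{everywhere};
\item each $\widetilde{s_\alpha}$ vanishes on the ample divisor $x_\alpha$.
\end{itemize}
Suppose $\dim S=g+k$ and $\widetilde{s_{\alpha_1}},\dots,\widetilde{s_{\alpha_{g+k}}}$ is a basis. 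At a point of $x_{\alpha_1}\cap\dots\cap x_{\alpha_{k+1}}$ at most $g-1$ of them are nonzero, so this intersection must be empty. For $k+1\le g$ ample divisors on the $g$-fold $C^{(g)}$ that is impossible, hence $k\ge g$.

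\textbf{The line bundle.} The identification $j^*\cO(1)\cong\omega_C((g+1)p_0)$ is also left open. You never determine $\mathcal M$, and your fallbacks (a hyperplane count that ``should give $3g-1$'', and a ``shape'' argument obtained by varying $p_0$) are heuristics. Even a correct degree would not fix the class in $\Pic^{3g-1}(C)$. The missing input is duality. By Serre duality, $H^0(F\otimes P_\alpha)^\vee$ is the fibre at $\alpha$ of $\varphi_\theta^*\iota^*R^g\Phi_{JC}(F^\vee)$. Mukai's duality for the transform, combined with Grothendieck duality for the embedding $a$ (with $\omega_{JC}$ trivial), gives $\varphi_\theta^*\iota^*R^g\Phi_{JC}(F^\vee)\cong\sExt^{g-1}(a_*\cO_C(-p_0),\cO_{JC})\cong a_*\omega_C(p_0)$. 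Hence $\mathcal M\cong\omega_C^{-1}(-p_0)$ with no extra term, and $j^*\cO(1)\cong\omega_C(p_0)\otimes\iota^*\cO_{JC}(\Theta)|_C\cong\omega_C((g+1)p_0)$.

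\textbf{Normalizations.} Only $\mathcal M\otimes\mathcal N$ is canonical. The formula $\mathcal N\cong\iota^*\cO_{JC}(-\Theta)|_C$ holds when $\cO_{JC}(\Theta_\alpha)$ is written as $\cO_{JC}(\Theta)\otimes P_\alpha^{-1}$ using the normalized Poincar\'e bundle, which is the normalization also used for $F\otimes P_\alpha$. With your description of $\mathcal N$ as $p_{2*}$ of the pullback of $\cO(\Theta)$ under $(y,\alpha)\mapsto y-\alpha$, $\mathcal N$ is trivial, because the pulled-back theta section is nonzero on every fibre. It is $p_{2*}(p_1^*L\otimes\cP_A)=\Phi_A(L)$, not $p_{2*}m^*L$, that equals $\widehat L^{-1}$.

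Your Fourier--Mukai instinct could in fact prove both steps at once. One would transport $H^0(F(\Theta))=\Hom(\cO_{JC}(-\Theta),F)$ through the transform to an $H^1$ on $C$, and check that $s_\alpha$ becomes the evaluation functional at $\alpha$. As written, though, this is not carried out.
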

\begin{proof}
The morphism $j$ is set-theoretically injective, since $s_\alpha$ is the unique global section of $F(\Theta)$ (up to scalar) vanishing along $\Theta_\alpha$. To show this, it suffices to prove that the zero locus of the global section of $F\otimes P_\alpha$ has codimension $\geq2$. And indeed, for every effective divisor $D\subset JC$ we have
\[
\theta^{g-1}\cdot c_1\left((F\otimes P_\alpha)(-D)\right)=\theta^{g-1}\cdot (\theta-gD)=g!\left(1-[C]\cdot D\right)\leq0,
\]
whereas the vector bundle $(F\otimes P_\alpha)(-D)$ is slope stable with respect to the polarization $\theta$ by \cite{kempf}; this implies $\Hom\left(\cO_{JC},(F\otimes P_\alpha)(-D)\right)=0$.

For the equality $j^*\cO(1)=\omega_C((g+1)p_0)$ observe that, for every $\alpha\in JC$, cohomology and base change provides canonical isomorphisms of:
\begin{itemize}
    \item $H^0(\cO_{JC}(\Theta_\alpha))$ with the fiber at $\alpha$ of $\iota^*\varphi_\theta^*\Phi_{JC}(\cO_{JC}(\Theta))=\iota^*\cO_{JC}(-\Theta)$. 

    \item The Serre dual of $H^0(F\otimes P_\alpha)$ with the fiber at $\alpha$ of the sheaf $\varphi_\theta^*\iota^*R^g\Phi_{JC}(F^\vee)$.
\end{itemize}

According to \cite[3.8 and Theorem 3.13]{mukai-duality} and Grothendieck--Verdier duality, we have
\[
\varphi_\theta^*\iota^*R^g\Phi_{JC}(F^\vee)= \sHom(\varphi_\theta^*\Phi_{JC}(F),\cO_{JC})=\sExt^{g-1}(a_* \cO_C(-p_0),\cO_{JC})=a_*\left(\omega_C(p_0)\right).
\]
Therefore, the pullback $j^*\cO(-1)$ of the universal line bundle on $\bP H^0(F(\Theta))$ equals
\[
\iota^*\cO_{JC}(-\Theta)|_C\otimes \omega_C^{-1}(-p_0)=\omega_C^{-1}(-(g+1)p_0)
\]
(here we use $\iota^*\cO_{JC}(\Theta)|_C=\cO_C(g\cdot p_0)$), which shows that $j^*\cO(1)=\omega_C((g+1)p_0)$.

Since $\omega_C((g+1)p_0)$ is very ample and $h^0(C,\omega_C((g+1)p_0))=2g=h^0(JC,F(\Theta))$, to conclude the proof it suffices to check that the image $j(C)$ is non-degenerate in $\bP H^0(F(\Theta))$. Or equivalently, by the natural identification $H^0(JC,F(\Theta))=H^0\left(C^{(g)},\pi^*(F(\Theta))\right)$, it suffices to check that $j(C)\subset \bP H^0(C^{(g)},\widetilde{F}) \subset \bP H^0\left(C^{(g)},\pi^*(F(\Theta))\right)$ spans a linear subspace of dimension $\geq 2g-1$.

Let us denote by $\bP(M)\subset \bP H^0(C^{(g)},\widetilde F)$ the linear span of $j(C)$. It follows from the proofs of \autoref{prop:decomposable}.\eqref{prop:decomp2} and \autoref{prop:bp-free} that the evaluation map
\[
\mathrm{ev}_M:M\otimes \mathcal O_{C^{(g)}}\to \widetilde{F}
\]
is surjective, in particular $\dim \bP(M)=g-1+k$ for some $k\geq0$.
On the other hand, picking points $[\widetilde{s_{\alpha_1}}],\dots,[\widetilde{s_{\alpha_{g+k}}}]\in j(C)$ spanning $\mathbb P (M)$, the map $\mathrm{ev}_M$ cannot be surjective along the intersection
\[
Z(\widetilde{s_{\alpha_1}})\cap \dots \cap Z(\widetilde{s_{\alpha_{k+1}}}),
\]
since at any point of this intersection the image of $\mathrm{ev}$ is generated by the $g-1$ sections $\widetilde{s_{\alpha_{k+2}}},\dots,\widetilde{s_{\alpha_{g+k}}}\in M$. It follows that the intersection $Z(\widetilde{s_{\alpha_1}})\cap \dots \cap Z(\widetilde{s_{\alpha_{k+1}}})$ in $C^{(g)}$ is empty. But since $Z(\widetilde{s_{\alpha_i}})$ contains the ample divisor $x_{\alpha_i}$ for every $i$, such emptiness necessarily implies that $k\geq g$, which concludes the proof.
\end{proof}

In particular, the morphism $j$ gives us a better understanding of the global sections of $\widetilde{F}$:

\begin{cor}\label{cor-linearspan}
    There is a canonical identification $H^0(JC,F(\Theta))=H^0(C^{(g)},\widetilde F)$. Moreover, a global section $s\in H^0(\widetilde F)$ vanishes at a point $\zeta\in C^{(g)}$ (identified with a length-g subscheme of $C$) if and only if $[s]$ lies in the linear span $\langle j(\zeta)\rangle\subset \mathbb PH^0(\widetilde F)$.
    \label{vanishingloci}
\end{cor}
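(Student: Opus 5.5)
The plan is to prove the two assertions of \autoref{cor-linearspan} in order. For the identification, I would first note that $\widetilde F=\pi^*(F(\Theta))(-K_{C^{(g)}})$. Since $K_{C^{(g)}}$ is effective, this gives an injection
\[
H^0(C^{(g)},\widetilde F)\hookrightarrow H^0\big(C^{(g)},\pi^*(F(\Theta))\big).
\]
Because $\pi$ is birational onto the normal variety $JC$, we have $\pi_*\cO_{C^{(g)}}=\cO_{JC}$, so the target equals $H^0(JC,F(\Theta))$ by the projection formula. The latter has dimension $2g$.

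For surjectivity of this injection, I would reuse the proof of \autoref{prop:morphismj}. There we saw that every $\widetilde{s_\alpha}$ lies in $H^0(\widetilde F)$, and that the points $[\widetilde{s_\alpha}]$ for $\alpha\in C$ span a linear subspace of $\bP H^0(\widetilde F)$ of dimension $\geq 2g-1$. Hence $h^0(\widetilde F)\ge 2g$, and the injection is an isomorphism. Under this identification $[\widetilde{s_\alpha}]=[s_\alpha]=j(\alpha)$.

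For the vanishing criterion, I would compare two $g$-dimensional subspaces of $H^0(\widetilde F)$ attached to each $\zeta\in C^{(g)}$:
\begin{itemize}
\item $N_\zeta:=\ker\big(H^0(\widetilde F)\to \widetilde F|_\zeta\big)$. By global generation (\autoref{prop:bp-free}) the evaluation map is surjective onto a rank-$g$ fiber, so $\dim N_\zeta=2g-g=g$.
\item $S_\zeta$, the linear space underlying $\langle j(\zeta)\rangle$. Since $j$ is the embedding by $L:=\omega_C((g+1)p_0)$, this is the annihilator of $H^0(C,L(-\zeta))$ under the duality $H^0(F(\Theta))\cong H^0(C,L)^\vee$. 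The line bundle $L(-\zeta)$ has degree $2g-1>2g-2$, so it is nonspecial, and Riemann--Roch gives $h^0(L(-\zeta))=g$. Hence $\dim S_\zeta=2g-g=g$ for every $\zeta$, including nonreduced ones (where the span is taken scheme-theoretically).
\end{itemize}

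When $\zeta=p_1+\dots+p_g$ with distinct $p_i$, each $\widetilde{s_{p_i}}$ vanishes at $\zeta$, since $\zeta\in x_{p_i}\subset Z(\widetilde{s_{p_i}})$. Hence $S_\zeta\subset N_\zeta$, and equality follows by dimension count.

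For arbitrary $\zeta$, I would argue by continuity. Both $\zeta\mapsto N_\zeta$ and $\zeta\mapsto S_\zeta$ define morphisms $C^{(g)}\to\Gr(g,H^0(\widetilde F))$:
\begin{itemize}
\item for $N_\zeta$, it is the kernel of a surjection $H^0(\widetilde F)\otimes\cO\to\widetilde F$ of vector bundles;
\item for $S_\zeta$, it is the annihilator of $\mathrm{pr}_*\big(q^*L\otimes\cI_{\mathcal Z}\big)$, where $\mathcal Z\subset C^{(g)}\times C$ is the universal divisor. This pushforward is a rank-$g$ subbundle of $H^0(L)\otimes\cO$ by cohomology and base change, since $h^1(L(-\zeta))=0$ for all $\zeta$.
\end{itemize}
These two morphisms agree on the dense open set of reduced divisors, and the Grassmannian is separated, so they agree everywhere. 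This yields $s(\zeta)=0\iff [s]\in\langle j(\zeta)\rangle$.

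I expect the main obstacle to be this last step for nonreduced $\zeta$. One has to set up the scheme-theoretic span correctly and check that it varies as a subbundle, which is exactly where the nonspeciality of $L(-\zeta)$ is used.
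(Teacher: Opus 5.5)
Your proposal is correct and follows the paper's proof. The identification comes from the span argument in the proof of \autoref{prop:morphismj}, and the vanishing criterion comes from the inclusion $\langle j(\zeta)\rangle\subset\bP H^0(\widetilde F\otimes\cI_\zeta)$ for reduced $\zeta$ plus a dimension count; your nonspeciality of $L(-\zeta)$ is the paper's $(g-1)$-very ampleness of $\omega_C((g+1)p_0)$. The only difference is the order of the last step: the paper extends the inclusion to all $\zeta$ by closedness and then counts dimensions, whereas you count first and extend the equality through two morphisms to the Grassmannian, which makes explicit the continuity step the paper leaves implicit.
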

\begin{proof}
    The inclusion $H^0(C^{(g)},\widetilde F)\hookrightarrow H^0\left(C^{(g)},\pi^*(F(\Theta))\right)=H^0(JC,F(\Theta))$ is an equality as shown in \autoref{prop:morphismj} (both sides are spanned by the curve $C$).
    For the second assertion, we start noting the inclusion
    \begin{equation}\label{eq:inclusion}
    \langle j(\zeta)\rangle \subset \bP H^0(C^{(g)},\widetilde{F}\otimes \mathcal I_{\zeta}).
    \end{equation}
   Indeed, \eqref{eq:inclusion} holds true when $\zeta$ is supported on $g$ distinct points $p_1,...,p_g\in C$ (since the global sections $\widetilde{s_{p_i}}\in H^0(C^{(g)},\widetilde{F})$ vanish along $x_{p_i}$, which contains $\zeta$) and so it holds true for every $\zeta\in C^{(g)}$. Thus the statement is equivalent to \eqref{eq:inclusion} being an equality.
    
    On the one hand, the line bundle $\omega_C((g+1) p_0)$ is $(g-1)$-very ample (as it has degree $3g-1$), so in virtue of \autoref{prop:morphismj} the span $\langle j(\zeta)\rangle\subset \bP H^0(C^{(g)},\widetilde{F})$ is $(g-1)$-dimensional. On the other hand, $\bP H^0(C^{(g)},\widetilde{F}\otimes \mathcal I_{\zeta})$ also has dimension $g-1$ (since $\widetilde{F}$ is globally generated with $h^0(\widetilde{F})=2g$), and so \eqref{eq:inclusion} is indeed an equality.
\end{proof}

We can also deduce that there are no unexpected loci where global sections of $\widetilde F$ become linearly dependent:

\begin{prop}\label{prop:maximalrank}
For any $k\ge g$ and $W\in \mathrm{Gr}(k,H^0(\widetilde F))$, the locus where the evaluation  map
\[
\mathrm{ev}_W:W\otimes \mathcal O_{C^{(g)}}\longrightarrow \widetilde F
\]
does not have maximal rank is of codimension at least $1+k-g$.
\end{prop}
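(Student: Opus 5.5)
We translate the degeneracy condition into linear algebra in $\bP H^0(\widetilde F)=\bP^{2g-1}$ using \autoref{cor-linearspan}, and then count dimensions on an incidence variety. Set $H:=H^0(\widetilde F)$, of dimension $2g$. Since $\widetilde F$ is globally generated, the evaluation map at $\zeta\in C^{(g)}$ is a surjection $H\to \widetilde F|_\zeta$ with kernel $H^0(\widetilde F\otimes\cI_\zeta)$. By \autoref{cor-linearspan} this kernel is the $g$-dimensional subspace $\Lambda_\zeta\subset H$ whose projectivization is $\langle j(\zeta)\rangle$. Because $k\ge g$, the map $\mathrm{ev}_W$ fails to have maximal rank at $\zeta$ exactly when $W\to \widetilde F|_\zeta$ is not surjective. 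This happens if and only if $\dim(W\cap\Lambda_\zeta)>k-g$, i.e.\ $\dim(W+\Lambda_\zeta)<2g$. So the degeneracy locus is
\[
D_W=\{\zeta\in C^{(g)}:\ \dim(W\cap \Lambda_\zeta)\ge k-g+1\}.
\]

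The key step is to stratify $D_W$ by the subspaces $U:=W\cap\Lambda_\zeta$ and bound each stratum. Fix $m\ge k-g+1$ and let $D_W^m$ be the locus where $\dim(W\cap\Lambda_\zeta)=m$. To each $\zeta\in D_W^m$ we associate the $(m-1)$-plane $\bP(W\cap\Lambda_\zeta)\subset \bP(W)$, which lies in $\langle j(\zeta)\rangle$. The essential input is the structure of the secant-type planes $\langle j(\zeta)\rangle$. Since $\omega_C((g+1)p_0)$ has degree $3g-1$, it is $(2g-1)$-very ample? No: degree $3g-1\ge 2\cdot(\text{length})+2g-2$ gives $m$-very ampleness only for $m\le g$. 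We will use the following consequence instead. For $\zeta\ne\zeta'$, the spans $\langle j(\zeta)\rangle$ and $\langle j(\zeta')\rangle$ meet exactly in $\langle j(\zeta\cap\zeta')\rangle$, where $\zeta\cap\zeta'$ denotes the gcd of the divisors. This holds because $\zeta+\zeta'-\gcd$ has length at most $2g$, and a degree $3g-1=2g+(g-1)$ bundle imposes independent conditions on subschemes of length at most $g+1$. Here is the main obstacle: length $2g$ is too long for a naive independence argument. To get around it, we will instead use the fact from \autoref{prop:maximalrank}'s setting that a point $[s]$ lies on $\langle j(\zeta)\rangle$ iff $\zeta\subset Z(s)$. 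Hence the fibre over $[s]$ of the incidence $\{(\zeta,[s]):[s]\in\langle j(\zeta)\rangle\}$ is $Z(s)$. For the general $s$ this is finite, and in general it is a proper closed subset. Moreover, for $s$ in the span of $j(\eta)$ with $\eta$ of length $r$, the locus $Z(s)$ contains $\eta+C^{(g-r)}$ but is otherwise controlled.

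Concretely, we then count dimensions on the incidence
\[
I=\{(\zeta,[s]):\ \zeta\in C^{(g)},\ [s]\in \bP(W)\cap\langle j(\zeta)\rangle\}.
\]
Its fibre over $\zeta\in D_W$ is $\bP(W\cap\Lambda_\zeta)$, of dimension at least $k-g$. Its fibre over $[s]\in\bP(W)\cong\bP^{k-1}$ is $Z(s)$. We stratify $\bP H$ by $e(s):=\dim Z(s)$. Using \autoref{cor-linearspan}, the locus $\Sigma_e$ of sections with $\dim Z(s)\ge e$ is swept by spans $\langle j(\eta)\rangle$ with $\eta$ of length $g-e$. A component of $Z(s)$ of dimension $e$ contains a family of $\zeta$, and the gcd of that family gives such an $\eta$. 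So $\dim\Sigma_e\le (g-e)+(g-e-1)=2g-2e-1$, i.e.\ $\Sigma_e$ has codimension at least $2e$ in $\bP^{2g-1}$. Intersecting with $\bP(W)$ gives $\dim(\Sigma_e\cap\bP(W))\le k-1-2e+$(excess). We will choose to bound this excess trivially by $\dim(\Sigma_e\cap \bP(W))\le \min(k-1,2g-2e-1)$. Then
\[
\dim I\le \max_e\big(\dim(\Sigma_e\cap\bP(W))+e\big)\le \max_e \min(k-1+e,\,2g-e-1).
\]
Comparing with $\dim I\ge \dim D_W+(k-g)$ gives $\dim D_W\le g-1-(k-g)$, i.e.\ codimension at least $1+k-g$. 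This requires the max to be at most $2g-k-1+(k-g)\cdot 0$; checking that the inequality closes, possibly after refining the estimate on $\Sigma_e\cap\bP(W)$, is exactly where we expect the real work to lie.
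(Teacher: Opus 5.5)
Your reformulation $D_W=\{\zeta:\dim(W\cap\Lambda_\zeta)\ge k-g+1\}$ is correct. The incidence count you build on it, however, has a genuine gap, and as set up it cannot close.

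For \emph{every} $\zeta\in C^{(g)}$ one has $\dim(W\cap\Lambda_\zeta)\ge k+g-2g=k-g$. So the fibres of $I\to C^{(g)}$ have dimension $\ge k-g-1$ everywhere, and $\dim I\ge k-1$. Your final step needs $\dim I\le g-1$, in order to get $\dim D_W\le 2g-k-1$ from $\dim I\ge \dim D_W+(k-g)$. That is impossible for $k>g$. Your own bound $\max_e\min(k-1+e,2g-e-1)$ reflects this: it is already $k-1$ at $e=0$ and larger for intermediate $e$. Any bound on all of $I$ gives at best $\dim D_W\le g-1$, i.e.\ codimension $1$. You would have to control the part of $I$ lying over $D_W$, and nothing in the proposal does that.

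The auxiliary claims are also unreliable:
\begin{itemize}
\item By Serre duality, $h^1(\omega_C((g+1)p_0)(-\xi))=h^0(\cO_C(\xi-(g+1)p_0))$. So $\omega_C((g+1)p_0)$ imposes independent conditions on all subschemes of length $\le g$, and on those of length $g+1$ except $(g+1)p_0$. Length-$2g$ subschemes fail to do so along a divisor of $C^{(2g)}$. Hence $\langle j(\zeta)\rangle\cap\langle j(\zeta')\rangle$ is often strictly larger than $\langle j(\gcd(\zeta,\zeta'))\rangle$.
\item An $e$-dimensional family of points of $Z(s)$ need not share a subdivisor of length $g-e$. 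So the estimate $\dim\Sigma_e\le 2g-2e-1$ is unproved.
\end{itemize}

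The missing idea is to test $D_W$ against the special sections $\widetilde{s_\alpha}$, $\alpha\in C$, whose zero loci contain the ample divisors $x_\alpha$, rather than against general sections. Because $j(C)$ spans $\bP H$ (\autoref{prop:morphismj}), you can pick $\alpha_1,\dots,\alpha_{2g-k}\in C$ with $W\oplus\langle\widetilde{s_{\alpha_1}},\dots,\widetilde{s_{\alpha_{2g-k}}}\rangle=H$.

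In your own language: if $\zeta\in x_{\alpha_1}\cap\dots\cap x_{\alpha_{2g-k}}$, then each $\widetilde{s_{\alpha_i}}$ vanishes at $\zeta$, i.e.\ lies in $\Lambda_\zeta$. Then $W+\Lambda_\zeta=H$, so $\zeta\notin D_W$. Hence $D_W$ is disjoint from an intersection of $2g-k$ ample divisors. This forces $\dim D_W\le 2g-k-1$, i.e.\ $\mathrm{codim}\,D_W\ge 1+k-g$.

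This is exactly the paper's proof. The paper phrases it as follows: the full evaluation map, which is surjective by \autoref{prop:bp-free}, would fail to be surjective on $D_W\cap Z(\widetilde{s_{\alpha_1}})\cap\dots\cap Z(\widetilde{s_{\alpha_{2g-k}}})$, so this set must be empty.
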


\begin{proof}
By \autoref{prop:morphismj} and \autoref{cor-linearspan}, we can pick points $\alpha_1,\dots,\alpha_{2g-k}\in C$ such that
\[
W\oplus \langle \widetilde s_{\alpha_1},\dots,\widetilde s_{\alpha_{2g-k}}\rangle = H^0(\widetilde F).
\]
Denoting by $B_W\subset C^{(g)}$ the locus where $\mathrm{ev}_W$ is not of maximal rank, observe that the full evaluation map
\[
\mathrm{ev}:H^0(\widetilde F)\otimes \mathcal O_{C^{(g)}}\longrightarrow \widetilde F
\]
fails to be surjective along the intersection
\[
B:=B_W\cap Z(\widetilde s_{\alpha_1})\cap \dots \cap Z(\widetilde s_{\alpha_{2g-k}}).
\]
Since $\mathrm{ev}$ is surjective by \autoref{prop:bp-free}, it follows that $B$ must be empty.
On the other hand, each zero locus $Z(\widetilde s_{\alpha_i})$ contains the ample divisor $x_{\alpha_i}$; therefore, $B$ is non-empty as soon as $\mathrm{codim}(B_W)\leq k-g$. This implies $\mathrm{codim}(B_W)\ge 1+k-g$, as desired.
\end{proof}

\begin{cor}
The determinant map
\[
\mathrm{det}_2:\bigwedge^g H^0(\widetilde F)\longrightarrow H^0\!\bigl(\mathcal O_{C^{(g)}}(\pi^*\Theta+g x)\bigr)
\]
is non-zero on decomposable tensors. In particular, it is of rank at least $g^2+1$. 
\end{cor}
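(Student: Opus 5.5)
The plan has two parts. First, non-vanishing on decomposable tensors follows from \autoref{prop:maximalrank} with $k=g$. Second, the rank bound follows by a dimension count on the Grassmannian.

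Let $s_1\wedge\dots\wedge s_g$ be a nonzero decomposable tensor, and set $W=\langle s_1,\dots,s_g\rangle\in\Gr(g,H^0(\widetilde F))$. Then $\det_2(s_1\wedge\dots\wedge s_g)$ is the determinant of $\mathrm{ev}_W:W\otimes\cO_{C^{(g)}}\to\widetilde F$, a map between bundles of the same rank $g$. Its zero locus is exactly the locus $B_W$ where $\mathrm{ev}_W$ fails to have maximal rank. By \autoref{prop:maximalrank} with $k=g$, this locus has codimension at least $1$, so it is a proper subset of $C^{(g)}$. Hence $\det_2(s_1\wedge\dots\wedge s_g)$ is a nonzero section.

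For the rank, consider the composition of the Plücker embedding with the projectivized determinant map:
\[
\Gr(g,2g)\hookrightarrow \bP\Bigl(\bigwedge^g H^0(\widetilde F)\Bigr)\dashrightarrow \bP\,\mathrm{Im}(\det_2)=|W|.
\]
Here $\bP\,\mathrm{Im}(\det_2)$ is the projectivization of the image, and the second arrow is linear projection from $\bP(\ker\det_2)$. By the first part, the center $\bP(\ker\det_2)$ does not meet the Grassmannian. So the restriction to $\Gr(g,2g)$ is a morphism given by a linear projection with center disjoint from $\Gr(g,2g)$, and such a projection is finite onto its image. Therefore the image has dimension $\dim\Gr(g,2g)=g^2$. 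It lies in $\bP\,\mathrm{Im}(\det_2)$, so $\dim\bP\,\mathrm{Im}(\det_2)\ge g^2$, i.e. $\mathrm{rk}(\det_2)\ge g^2+1$.

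Both steps are essentially formal, given \autoref{prop:maximalrank}. The one point to verify carefully is the standard fact that a linear projection whose center misses a projective variety restricts to a finite morphism on it, and hence preserves dimension. This holds because the fibers are closed subvarieties of the variety lying in linear spaces $\langle \Lambda, y\rangle$ through the center $\Lambda$; a positive-dimensional such fiber would meet the hyperplane $\Lambda$ inside $\langle\Lambda,y\rangle$, contradicting disjointness.
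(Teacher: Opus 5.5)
Your proposal is correct and follows the paper's argument. Non-vanishing on decomposable tensors comes from \autoref{prop:maximalrank} with $k=g$. The rank bound is the same dimension count: you phrase it as the linear projection from $\bP(\ker\det_2)$ restricting to a finite map on $\Gr(g,2g)$, while the paper phrases it as the $g^2$-dimensional Grassmannian missing the linear subspace $\bP(\ker\det_2)$, which forces that subspace to have codimension at least $g^2+1$.
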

\begin{proof}
Applying \autoref{prop:maximalrank} for $k=g$, we obtain that the determinant map $\det_2$ vanishes on no decomposable tensor. Therefore, the Grassmannian $\mathrm{Gr}(g,H^0(\widetilde F))$ does not intersect the linear subspace $\mathbb P (\ker\det_2)\subset \mathbb P\bigwedge^g H^0(\widetilde F)$. Since $\dim \mathrm{Gr}(g,H^0(\widetilde F))=g^2$, it follows that $\mathrm{codim}(\ker\det_2)\ge g^2+1$, or equivalently, that $\rk(\det_2)\ge g^2+1$.
\end{proof}

\section{Application to the degree of irrationality of Jacobians}

With the study performed in the previous section, we are now in a position to prove \autoref{thm:intro-jacobian-bound}:

\begin{proof}[Proof of \autoref{thm:intro-jacobian-bound}]
In view of \autoref{deg-gg} and \autoref{prop:bp-free}, to establish that $\irr(JC)=\irr(C^{(g)})\leq 2^g$ it suffices to show that $c_g(\widetilde{F})=2^g$. 

To perform this computation, observe that $c_i(F)=[W_{g-i}(C)]$ in the Chow group of $JC$ by \cite{mattuck,schw}, where $W_{g-i}(C)=C+\overset{g-i}{...}+C$. In particular, 
\[
c_i(\pi^*F)=\frac{(\pi^*\theta)^i}{i!}\in H^{2i}(C^{(g)},\bZ).
\]
On the other hand, denoting also by $x$ the cohomology class $c_1(\cO_{C^{(g)}}(x))\in H^2(C^{(g)},\bZ)$, it is well known that
\[
        x^k(\pi^*\theta)^{g-k}=\frac{g!}{k!}
\]
(see \cite[Lemma 1]{kouvi}). Applying the splitting principle (as in \cite[Remark 3.2.3]{fulton}) yields
\[
c_g(\widetilde{F})=c_g(\pi^*F\otimes \mathcal O_{C^{(g)}}(x))=\sum_{k=0}^g x^k\frac{ (\pi^*\theta)^{g-k}}{(g-k)!}=\sum_{k=0}^g\binom{g}{k}=2^g,
\]
which completes the proof.
\end{proof}

In other words, via the correspondence described in \autoref{compdegree}, we have shown that a general element $V^\vee\in \mathrm{Gr}(g+1,H^0 (\widetilde{F}))$ induces a rational map $\varphi_V:C^{(g)}\dashrightarrow \mathbb P^g$ which is dominant of degree $2^g$. In view of \autoref{cor-linearspan}, we can regard $\varphi_V$ more geometrically as follows. We have the embedded curve
\[
j:C\hookrightarrow |\omega_C((g+1)p_0)|^\vee=\bP^{2g-1}
\]
and we have fixed a general $g$-dimensional linear space $\bP(V^\vee)\subset \bP^{2g-1}$.
Then for a general $p_1+...+p_g\in C^{(g)}$, the $(g-1)$-dimensional span $\langle j(p_1),...,j(p_g)\rangle$ intersects $\bP(V^\vee)$ at exactly one point; we define $\varphi_V(p_1+...+p_g)\in\bP(V^\vee)$ to be this intersection point.

It is natural to ask whether for every choice of the subspace $V^\vee$, the map $\varphi_V$ remains dominant. More interestingly, one can also ask whether $\deg(\varphi_V)$ may drop for special choices of $V^\vee$ (which would result in a better upper bound for $\irr(JC)$). The following lemma answers these questions in the case $g=2$:

\begin{lem}
For every $V^\vee\in \mathrm{Gr}\left(g+1,H^0(\widetilde{F})\right)$, $(\widetilde{F},V^\vee)$ defines a good pair. Furthermore, if $g=2$ then $(\widetilde{F},V^\vee)$ induces a rational dominant map $\varphi_V$ of degree $4$.
\end{lem}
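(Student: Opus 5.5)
For the first assertion I would check the three conditions defining a good pair for $(\widetilde F, V^\vee)$ with an arbitrary $V^\vee\in\Gr(g+1,H^0(\widetilde F))$.

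\begin{itemize}
\item $\widetilde F$ is locally free of rank $g=\dim C^{(g)}$, hence reflexive.
\item $H^0(\widetilde F^\vee)=0$ follows verbatim from the first paragraph of the proof of the lemma preceding \autoref{deg-gg}. This uses that $\widetilde F$ is globally generated (\autoref{prop:bp-free}) and that $c_g(\widetilde F)=2^g>0$.
\item $V^\vee$ generates $\widetilde F$ in codimension $1$. Apply \autoref{prop:maximalrank} with $k=g+1$. Since $g+1>g=\rk\widetilde F$, "maximal rank" for $\mathrm{ev}_{V^\vee}$ means surjectivity. So the locus $B$ where $V^\vee$ fails to generate $\widetilde F$ has codimension at least $2$.
\end{itemize}

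Together these show that $(\widetilde F,V^\vee)$ is a good pair for every choice of $V^\vee$.

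For $g=2$, \autoref{compdegree} gives the fibre $\varphi_V^{-1}([s])\cap(X\setminus B)=Z(s)\cap(X\setminus B)$, where now $B$ is a finite set of points of $C^{(2)}$. By \autoref{cor-linearspan}, $Z(s)$ consists of those $\zeta\in C^{(2)}$ whose secant line $\langle j(\zeta)\rangle$ passes through $[s]$. Here $j(C)\subset\bP^3$ is the genus-$2$, degree-$5$ curve embedded by $|\omega_C(3p_0)|$.

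First I will show that $Z(s)$ is finite for every $[s]\notin j(C)$. Suppose a point $q\notin j(C)$ lay on infinitely many secants. Then the projection from $q$ would map $C$ onto a plane curve with degree $k>1$, where $k$ divides $5$. This forces $k=5$ and the image to be a line, contradicting non-degeneracy of $j(C)$. Since $\bP(V^\vee)$ is a plane, it meets $j(C)$ in finitely many points. Hence for a general $[s]\in\bP(V^\vee)$ the scheme $Z(s)$ is zero-dimensional of length $c_2(\widetilde F)=4$. In particular $\varphi_V$ is dominant.

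Next I will show that $Z(s)$ avoids $B$ for general $[s]\in\bP(V^\vee)$. For each of the finitely many $\zeta\in B$, the set of $[s]\in\bP(V^\vee)$ with $\zeta\in Z(s)$ is $\bP(V^\vee)\cap\langle j(\zeta)\rangle$. This is a point or a line, in either case a proper closed subset of the plane. Removing this finite union, a general fibre of $\varphi_V$ is the whole of $Z(s)$. Since "finite zero locus disjoint from $B$" is an open condition on $[s]$, I conclude $\deg\varphi_V=4$ exactly as at the end of the proof of the lemma preceding \autoref{deg-gg}.

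The main obstacle is the finiteness of $Z(s)$ for a general point of an arbitrary, possibly special, plane. The projection argument is the step I would write out carefully. A consistency check is that the secant count through a general point of $\bP^3$, $\binom{4}{2}-g=4$, agrees with $c_2(\widetilde F)$.
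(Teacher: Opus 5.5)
Your proof is correct and essentially the same as the paper's. You get the good pair from \autoref{prop:maximalrank} with $k=g+1$, finiteness of $Z(s)$ for $[s]\notin j(C)$ from \autoref{cor-linearspan}, and the degree staying $4$ because $\mathbb{P}(V^\vee)\cap\langle j(\zeta)\rangle$ (equivalently $\mathbb{P}(V^\vee\cap H^0(\widetilde F\otimes\mathcal I_\zeta))$, since $h^0(\widetilde F\otimes\mathcal I_\zeta)=2$) is never the whole plane. Your projection-from-a-point argument, using that $\deg j(C)=5$ is prime, also supplies the justification the paper leaves implicit for why a point off $j(C)$ lies on only finitely many secant lines.
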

\begin{proof}
For the first claim, simply observe that any  $V^\vee\in \mathrm{Gr}\left(g+1,H^0(\widetilde{F})\right)$ generates $\widetilde{F}$ in codimension 1, as a direct application of \autoref{prop:maximalrank} for $k=g+1$.

Now assume $g=2$ and consider any $V^\vee\in \mathrm{Gr}(3,H^0(\widetilde{F}))$. In view of \autoref{vanishingloci}, a section of $\mathbb PV^\vee \setminus C$  has vanishing locus of dimension $0$. Hence, the fiber computation in \autoref{compdegree} shows that $\varphi_V$ is dominant, and $\deg(\varphi_V)<c_2(\widetilde{F})=4$ if and only if all sections of $V^\vee$ vanish at a common point; namely, if and only if $V^\vee \subset H^0(\widetilde{F}\otimes \mathcal I_{\zeta})$
for some $\zeta\in C^{(2)}$.

But note that the latter condition cannot happen: since $h^0(\widetilde{F})=4$ and $\widetilde{F}$ is globally generated (recall \autoref{prop:bp-free}), we have $h^0(\widetilde{F}\otimes \mathcal I_{\zeta})=2$ for every $\zeta\in C^{(2)}$. It follows that $\varphi_V$ has degree $4$ for every $V^\vee\in \mathrm{Gr}(3,H^0(\widetilde{F}))$.
\end{proof}

\section{The case of Prym varieties}

In this section we apply a similar formalism to the study of Prym varieties, which will result in the upper bound of \autoref{thm:intro-prym-bound}.

\subsection{Basics on Prym varieties}
 Let us first recall some standard facts on Prym varieties; the reader may consult the seminal work \cite{mumford} or \cite[Chapter 12]{birklange} for details. Let $f:\tC\to C$ be an irreducible étale double cover of a smooth projective curve $C$ of genus $g$. The curve $\tC$ has genus $2g-1$, and comes with an involution $\sigma:\tC\to \tC$ having no fixed point. The kernel of the norm map
\[
\Nm_f:J\tC\to JC
\]
has two connected components; the one containing the origin of $J\tC$ is called the \emph{Prym variety} $\Pr:=\Pr(f)$. It has dimension $g-1$ and is naturally equipped with a principal polarization $\xi\in \NS(\Pr)$ such that $2\xi$ equals the restriction of the canonical principal polarization $\widetilde{\theta}$ on $J\tC$. 

The Prym variety can also be realized as the image of the endomorphism $\Id-\sigma$ of $J\tC$; denoting by $j$ the inclusion $\Pr\hookrightarrow J\tC$, one has $j^\vee\circ \varphi_{\widetilde{\theta}}=\varphi_\xi\circ (\Id-\sigma)$. In other words, for every $\alpha\in J\tC$: 
\begin{equation}\label{restriction-prym}
P_{\alpha-\sigma(\alpha)}={{P_\alpha}_|}_{\Pr}\in {\Pr}^\vee.
\end{equation}

If the curve $C$ is hyperelliptic, then $\Pr$ is a product of Jacobians (see \cite[Section 7]{mumford}) and the upper bound $\irr(\Pr)\leq 2^{2g-3}$ follows immediately from \autoref{thm:intro-jacobian-bound}. For this reason, in the sequel we will assume that $C$ is not hyperelliptic. 

For a chosen point $p_0\in\tC$, we consider the \emph{Abel--Prym map}
\[
b: \tC\hookrightarrow \Pr, \;\;\;p\mapsto \cO_{\tC}\left(p-\sigma(p)-p_0+\sigma(p_0)\right)
\]
which embeds $\tC$ in $\Pr$ as a curve of class $2\frac{\xi^{g-2}}{(g-2)!}\in H^{2g-4}(\Pr,\bZ)$. Note that $b$ is the composition of the Abel--Jacobi embedding $a:\tC\hookrightarrow J\tC$ (with base point $p_0$) and $\Id-\sigma:J\tC\to \Pr$.

For arbitrary $d$, the norm map extends to a map $\Pic^d(\tC)\to \Pic^d(C)$, which we also denote by $\Nm_f$. In particular, the fiber $\Nm_f^{-1}(\omega_C)$ splits as a disjoint union of two natural torsors of $\Pr$:
\begin{align*}
{\Pr}^+:=\{L\in\Pic^{2g-2}(\tC):\; \Nm_f(L)=\omega_C,\;\text{$h^0(\tC,L)$ is even}\},\\
{\Pr}^-:=\{L\in\Pic^{2g-2}(\tC):\; \Nm_f(L)=\omega_C,\;\text{$h^0(\tC,L)$ is odd}\}.
\end{align*}
These torsors provide realizations of two important loci: a canonical theta divisor $\Xi\subset {\Pr}^+$, and the \emph{Prym--Brill--Noether locus} $V^2(f)\subset {\Pr}^-$ (see \cite{welters}). The latter is well known to have codimension 3, see for example \cite[Theorem 2.2]{clv}.

We choose, once and for all, an element $M\in \Pr^-$. In this way, via tensor product with $M$ (resp.~with $M(\sigma(p_0)-p_0)$) we will freely identify $\Pr$ with $\Pr^-$ (resp.~with $\Pr^+$), allowing us to regard both the theta divisor $\Xi$ and $V^2(f)$ as subvarieties of $\Pr$.

Consider the variety $X^-$ defined by the cartesian diagram
\[
\begin{tikzcd}
X^- \arrow{d}\arrow[hookrightarrow]{r} & \tC^{(2g-2)} \arrow{d} \\
{\Pr}^- \arrow[hookrightarrow]{r}                          & \Pic^{2g-2}(\tC), \
\end{tikzcd}
\]
we will denote by $\pi$ the corresponding birational morphism $X^-\to \Pr^-\cong \Pr$.
It is an isomorphism over the complement of $V^2(f)\subset \Pr$, and will play the role of the Abel--Jacobi map $C^{(g)}\to JC$ in the previous sections. Indeed, \autoref{thm:intro-prym-bound} will be deduced from a globally generated vector bundle of rank $g-1$ on (a desingularization of) $X^-$, obtained as the twist of the pullback of a Picard type bundle on $\Pr$.

For any line bundle $L\in \Pic^{g-1}(\tC)$ satisfying the condition $h^0(C,\Nm_f(L))=0$, let us consider the \emph{Prym--Picard sheaf} $G:=\iota^*\varphi_{\xi}^*\left(R^1\Phi_{\Pr}(b_*L)\right)$ on $\Pr$.

\begin{prop}\label{prympicard}
    The Prym--Picard sheaf $G$ is locally free of rank $g-1$, with Chern classes
    \[
    c_i(G)=2^i\frac{\xi^i}{i!}\in H^{2i}(\Pr,\bZ)
    \]
    for every $1\leq i \leq g-1$. Furthermore, for every $\beta\in \Pr$ the following hold:
    \[
    h^i(\Pr,G\otimes P_\beta)=\left\{
    \begin{array}{c l}
     \binom{g-2}{i} & \text{if $0\leq i\leq g-2$ and $\beta\in \tC$}\\
     0 &  \text{if $i=g-1$ or $\beta \notin \tC$.}\\
    \end{array}
    \right.
    \]
\end{prop}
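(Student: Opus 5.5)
The plan is to follow the strategy used for the Jacobian Picard bundle $F$, with the Abel--Prym curve $b(\tC)\subset\Pr$ playing the role of $C\subset JC$. Everything is computed through the Fourier--Mukai transform $\Phi_{\Pr}$ of the sheaf $b_*L$, and the standing hypothesis $h^0(C,\Nm_f(L))=0$ is used to control the cohomology of $L\otimes P_\beta$ along $\tC$.

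\emph{Step 1: the sheaf $b_*L$ is $WIT_1$, hence $G$ is locally free of rank $g-1$.}
\begin{itemize}
\item For $\gamma\in\Pr^\vee$, write $\gamma=\varphi_\xi(\beta')$. By \eqref{restriction-prym}, $b^*P_\gamma$ is a line bundle on $\tC$ of the form $\mu\otimes\sigma^*\mu^{-1}$; in particular it lies in $\ker\Nm_f$.
\item Therefore $H^i(\Pr,b_*L\otimes P_\gamma)=H^i(\tC,L\otimes\mu\otimes\sigma^*\mu^{-1})$.
\item The line bundle $L':=L\otimes\mu\otimes\sigma^*\mu^{-1}$ has degree $g-1$ and satisfies $\Nm_f(L')=\Nm_f(L)$.
\item If $h^0(\tC,L')\neq0$, then an effective divisor $D\in|L'|$ gives $\Nm_f(D)\in|\Nm_f(L)|$, contradicting $h^0(C,\Nm_f(L))=0$. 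Hence $h^0(L')=0$ for every $\gamma$.
\item Riemann--Roch on $\tC$, of genus $2g-1$, then gives $h^1(L')=(2g-2)-(g-1)=g-1$ for every $\gamma$.
\end{itemize}
By cohomology and base change, $R^1\Phi_{\Pr}(b_*L)$ is locally free of rank $g-1$ and $R^0\Phi_{\Pr}(b_*L)=0$. Pulling back by the isomorphisms $\varphi_\xi$ and $\iota$ preserves both properties, so $G$ is locally free of rank $g-1$.

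\emph{Step 2: the cohomology of $G\otimes P_\beta$.}
\begin{itemize}
\item Since $b_*L$ is $WIT_1$, Mukai's inversion gives $\Phi_{\Pr^\vee}(R^1\Phi_{\Pr}(b_*L))\cong(-1_{\Pr})^*b_*L[-g+1+\dots]$, i.e. up to a shift and $(-1)^*$.
\item Hence $H^i(G\otimes P_\beta)$ is computed by the fibers of this transform. By base change, it is supported exactly where the fiber of $b_*L$ (after the identifications through $\varphi_\xi$, $\iota$ and $(-1)$) is nonzero, which is $\beta\in\tC$.
\item Vanishing for $\beta\notin\tC$, and for $i=g-1$, then follows as in \eqref{cohomologypicard}.
\item For $\beta\in\tC$, I would imitate \cite[Proposition 4.4]{mukai-duality}. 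Use the spectral sequence $E_2^{p,q}=\mathrm{Tor}$ / the Koszul-type computation $H^i(G\otimes P_\beta)\cong \mathrm{Ext}^{i+1}_{\Pr}(k(\beta),b_*\text{--})$-style. Concretely, $H^i(G\otimes P_\beta)$ equals $H^{i+1}$ of the derived restriction of $b_*(\text{line bundle})$ to the point. This is $\bigwedge^{i}$ of the normal/conormal space, giving $\wedge^i$ of a $(g-2)$-dimensional space, hence $\binom{g-2}{i}$.
\end{itemize}
I expect this local computation, together with keeping the signs from $\iota$ and $\varphi_\xi$ straight, to be the main obstacle. I would handle it by identifying $L^{\mathbb{L}}i_\beta^*(b_*\mathcal{O})$ with $\bigoplus_k\bigwedge^kN^\vee_{\tC/\Pr,\beta}[k]$, where the normal bundle has rank $g-2$.

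\emph{Step 3: the Chern classes.} I would compute the Chern character by Grothendieck--Riemann--Roch for the Fourier--Mukai transform:
\[
\ch(\Phi_{\Pr}(b_*L))=\mathrm{PD}\circ(\text{Fourier transform of }\ch(b_*L)).
\]
\begin{itemize}
\item By Grothendieck--Riemann--Roch for $b$, one has $\ch(b_*L)=[\tC]+\chi(L)[\mathrm{pt}]=2\frac{\xi^{g-2}}{(g-2)!}+0$, using $\chi(L)=(g-1)-(2g-2)=-(g-1)+(g-1)$. Here I would double-check the degree term, since Todd classes of $\Pr$ are trivial.
\item The cohomological Fourier transform sends $\frac{\xi^{g-2}}{(g-2)!}$ to $-\frac{\hat\xi}{1!}$ up to sign, and the point class to $1$.
\item Pulling back via $\varphi_\xi$ and accounting for the shift, this gives $\ch(G)=(g-1)-2\xi$ with no higher terms, i.e. $\ch(G)=(g-1)+\ldots$ exactly as for Picard bundles, where $\ch(F)=g-\theta$.
\item Applying Newton's identities to $\ch(G)=(g-1)-2\xi$ gives $c(G)=e^{2\xi}$ truncated, i.e. $c_i(G)=2^i\xi^i/i!$.
\end{itemize}
The sign of $\xi$ is fixed by the $\iota^*$ in the definition of $G$, exactly as $\det F=\cO(\Theta)$ in the Jacobian case.
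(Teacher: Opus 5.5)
Your overall route is the paper's. Step 1 is the norm argument plus base change, and it is correct; checking that $b^*P_\gamma$ has trivial norm suffices in place of Jacobi inversion for Pryms. Step 2 is the cohomology of $G\otimes P_\beta$ via Mukai's argument, which the paper only cites. Step 3 computes $\ch(G)$ via Mukai's cohomological Fourier transform.

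In Step 2 your indexing is off, though harmlessly. $R^1\Phi_{\Pr}(b_*L)$ is $WIT_{g-2}$, with $\Phi_{\Pr^\vee}(R^1\Phi_{\Pr}(b_*L))\cong(-1)^*b_*L[-(g-2)]$. So $H^i(G\otimes P_\beta)$ is a $\mathrm{Tor}_{g-2-i}$ against the skyscraper at the corresponding point, i.e.\ $\wedge^{g-2-i}$ of the conormal space of $\tC$ there. It is not ``$H^{i+1}$ of the derived restriction'', but the dimension $\binom{g-2}{i}$ is unaffected.

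The real problem is Step 3, which reaches the correct formula only through compensating errors.

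(i) $\chi(\tC,L)=(g-1)+1-(2g-1)=1-g\neq 0$. Hence $\ch(b_*L)=2\frac{\xi^{g-2}}{(g-2)!}+(1-g)[\mathrm{pt}]$, as in the paper. This point term is exactly what produces the rank of the transform; with your value $0$ the transform would have rank $0$, contradicting Step 1.

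(ii) Mukai's formula sends $\xi^k/k!$ to $(-\hat\xi)^{g-1-k}/(g-1-k)!$. This is forced degree by degree by $\ch(\Phi_{\Pr}(\cO(\Xi)))=e^{-\hat\xi}$. So $\ch(\Phi_{\Pr}(b_*L))=(1-g)-2\hat\xi$. Since $\Phi_{\Pr}(b_*L)\cong R^1\Phi_{\Pr}(b_*L)[-1]$, the shift negates \emph{both} terms, and $\ch(G)=(g-1)+2\xi$. Your $(g-1)-2\xi$ has the wrong sign in degree $2$, and $\iota^*$ cannot correct it: $(-1)^*$ acts on $H^m(\Pr,\mathbb{Q})=\wedge^m H^1$ by $(-1)^m$, hence trivially on even cohomology. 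For the same reason $\ch(F)=g+\theta$, not $g-\theta$, consistent with $\det F=\cO(\Theta)$.

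(iii) Suppose $\ch(E)=r+a$ with $\ch_k(E)=0$ for $k\ge 2$. Then the Chern roots have power sums $p_1=a$ and $p_k=0$ for $k\ge 2$, so $\log c(E)=a$ and $c(E)=e^{a}$. Your $(g-1)-2\xi$ would therefore give $c_i=(-2)^i\xi^i/i!$. Your Newton step silently flips the sign back, which is the only reason you land on $c_i(G)=2^i\xi^i/i!$.

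Redoing Step 3 with the correct $\chi(L)$ and the shift carried through gives exactly the paper's computation $\ch(G)=(g-1,2\xi,0,\dots,0)$, hence $c(G)=e^{2\xi}$.
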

\begin{proof}
Cohomology and base change, combined with Jacobi inversion for Pryms (see e.g.~\cite[Lemma 3.2]{naranjo}), yields a canonical identification of the fiber of $G$ at any point $\beta\in \Pr$ with
\[
H^1\left(\Pr,b_*L\otimes P_\beta^{-1}\right)=H^1\left(\tC,L\otimes\beta\right).
\]
Since the norm of an effective line bundle must be effective, $h^0(C,\Nm_f(L))=0$ implies that $h^0(\tC,L\otimes \beta)=0$ for every $\beta\in\Pr$.  Therefore, $G$ is locally free of rank $-\chi(\tC,L)=g-1$. 

Furthermore, $R^0\Phi_{\Pr}(b_*L)$ being reflexive (it is the pushforward of a line bundle on $\tC\times  \Pr^\vee$), it must be torsion-free and hence the equality $R^0\Phi_{\Pr}(b_*L)=0$ holds. Accordingly,
\[
G[-1]=\iota^*\varphi_\xi^*\left(\Phi_{\Pr}(b_*L)\right).
\]
Since  $\ch(b_* L)=(0,...,0,2\frac{\xi^{g-2}}{(g-2)!},1-g)$, using \cite[Proposition 1.17]{mukai2} one obtains the Chern character $\ch(G)=(g-1,2\xi,0,...,0)$,
from  which the Chern classes of $G$ can be easily recovered.

For the cohomological assertion, a similar argument to the one in \cite[Proposition 4.4.(1)]{mukai-duality} can be applied.
\end{proof}

\begin{rem}\label{rem:picard-restriction}
The bundle $G$ is the restriction to $\Pr$ of the following Picard sheaf on $J\tC$:
\[
K_L:=\iota^*\varphi_{\widetilde{\theta}}^*\left(R^1\Phi_{J\tC}(a_*L)\right)=t_{\gamma}^*F_{g-1},
\]
where  $\gamma=L(-(g-1)p_0)\in J\tC$ and $F_{g-1}$ is the Picard sheaf of rank $g-1$ on $J\tC$ defined in \cite[Section 2]{schw}. In particular, using the computation for $\det(F_{g-1})$  in \cite{mattuck,schw} it follows that
$\det(K_L)=\cO_{J\tC}(\Theta^L)$, where
\[
\Theta^L=\Bigl\{\alpha\in\Pic^0(\tC)=J\tC:\; h^0\left(\tC,L((g-1)p_0)\otimes\alpha\right)>0\Bigr\}\subset J\widetilde C.
\]
\end{rem}

\subsection{Proof of \autoref{thm:intro-prym-bound}} 
Given a point $p\in \widetilde C$, we consider
$x_p := p+\widetilde C^{(2g-3)} \subset \widetilde C^{(2g-2)}$. The following two lemmas are probably well known to the experts, but we add some details since we could not find a reference.

\begin{lem}
\label{remark}
The intersection $x_p \cap X^-$ is reduced for every point $p\in\tC$. 
\end{lem}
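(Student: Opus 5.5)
To prove that $x_p\cap X^-$ is reduced, I would realize $x_p\cap X^-$ as the image of a closed subvariety of $\tC^{(2g-3)}$ and check reducedness on a dense open subset, together with a Cohen--Macaulay argument to rule out embedded components. Consider the addition map $\tC^{(2g-3)}\to\tC^{(2g-2)}$, $D\mapsto p+D$. It is a closed embedding with image $x_p$. Under this identification, $x_p\cap X^-$ corresponds to the scheme-theoretic fiber
\[
Y_p:=\{D\in \tC^{(2g-3)}:\ \cO_{\tC}(p+D)\in {\Pr}^-\},
\]
which is the preimage of the translated torsor ${\Pr}^-(-p)\subset \Pic^{2g-3}(\tC)$ under the Abel--Jacobi map.

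First, I would show that $X^-$ is Cohen--Macaulay of the expected dimension $g-1$ inside the smooth variety $\tC^{(2g-2)}$ of dimension $2g-2$, so it is locally cut out by $g-1$ equations.

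Locally $\Pr^-\subset\Pic^{2g-2}(\tC)$ is cut out by $g$ equations, namely the fibre of the norm over $\omega_C$ in the $g$-dimensional $\Pic^{2g-2}(C)$. Hence $X^-$ is the fibre of the composite $\tC^{(2g-2)}\to \Pic^{2g-2}(C)$ over $\omega_C$.

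That fibre is $\{D\in\tC^{(2g-2)}:\ f_*D\in|\omega_C|\}$. It has dimension $g-1$: it is the union of the finitely many liftings over $|\omega_C|\cong\bP^{g-1}$, and since the pushforward $f_*:\tC^{(2g-2)}\to C^{(2g-2)}$ is finite, the fibre over $|\omega_C|$ is finite over $\bP^{g-1}$. So the fibre has the expected codimension $g-1$, is a local complete intersection, and is therefore Cohen--Macaulay. The parity condition only selects connected components, so $X^-$ is lci of dimension $g-1$ and $\pi$ is birational onto it.

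Intersecting with the Cartier divisor $x_p$ then gives an lci scheme of pure dimension $g-2$, provided $x_p$ contains no component of $X^-$. This holds because $X^-$ is irreducible (being birational to $\Pr^-$) and is not contained in $x_p$. An lci scheme has no embedded points, so it suffices to prove that $x_p\cap X^-$ is generically reduced on each irreducible component.

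For generic reducedness, I would compute tangent spaces at a general point $D_0=p+D'$ of each component. At a point with $h^0(\tC,D_0)=1$, the map $\pi$ is a local isomorphism onto $\Pr^-$. The divisor $x_p$ pulls back to $\pi_*(x_p\cap X^-)$, which is the locus $\{L\in\Pr^-:\ h^0(L(-p))>0\}$. This locus is a translate of the Prym theta divisor $\Xi$, restricted from $\widetilde\Theta$ and using $h^0$ odd. Its reducedness follows from the known fact that $\widetilde\Theta|_{\Pr^\pm}=2\Xi$ with $\Xi$ reduced (Mumford). More precisely, the pullback of the translated $\widetilde\Theta$ cuts $\Pr^-$ in $\Xi_p+\Xi_{\sigma p}$-type divisors, and one identifies $x_p\cap X^-$ with one reduced component.

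I must also check that every component of $x_p\cap X^-$ meets the locus $h^0=1$. Since $V^2(f)$ has codimension $3$ in $\Pr$, and the components have dimension $g-2$, no component can lie over $V^2(f)$. Hence each component meets $\{h^0=1\}$.

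I expect the main obstacle to be the identification in the previous step. Concretely, one must show that the divisor $\{L\in \Pr^-: h^0(L(-p))>0\}$ is reduced, which amounts to showing that the restriction of $\widetilde\Theta_{p}$ to $\Pr^-$ splits as a sum of two distinct reduced translates of $\Xi$. I would try to handle this via the Riemann--Mumford parity: for $L\in \Pr^-$ with $h^0(L)=1$ we have $h^0(L(-p))>0$ iff $p$ lies in the unique divisor of $L$, and the differential computation reduces to the non-vanishing of the section at simple points.
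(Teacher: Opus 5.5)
Your proposal has a genuine gap in the generic-reducedness part, and it shows up in two places.

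\textbf{The dimension count for the exceptional locus is wrong.} You claim that every component of $x_p\cap X^-$ meets $\{h^0=1\}$. But the fibre of $\pi$ over $L$ is $|L|\cong\bP^{h^0(L)-1}$, so $\pi^{-1}(V^2(f))$ has dimension $(g-4)+2=g-2$. It is the exceptional divisor of $\pi$, of the same dimension as the components of $x_p\cap X^-$. Comparing with $\dim V^2(f)=g-4$ only rules out components mapping finitely into $V^2(f)$. A priori a component of $x_p\cap X^-$ could be a whole component of the exceptional divisor. This happens when $p$ is a base point of $|L|$ for all $L$ in a component of $V^2(f)$; then parity gives $h^0(L(\sigma(p)-p))\ge 4$ along a $(g-4)$-dimensional family in $\Pr^+$, which you would have to exclude separately.

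\textbf{The multiplicity-one step is not proved, and the indicated mechanism fails as written.} This step is the real content of the lemma, and you leave it as ``the main obstacle''. Reducedness of the \emph{set} $\{L\in\Pr^-: h^0(L(-p))>0\}$ is automatic; what must be controlled is the scheme $x_p|_{X^-}$. The translate $\{M: h^0(M(\sigma(p)-p))>0\}$ has restriction to $\Pr^-$ supported exactly on the image of $x_p$. However, it restricts to \emph{twice} a translate of $\Xi$ (this is $\widetilde\Theta|_{\Pr^+}=2\Xi$), so comparing with it only bounds the multiplicity by $2$. You need an auxiliary general point $q$. Using $\omega_{\tC}\otimes L^{-1}\cong\sigma^*L$ for $L\in\Pr^-$, the translate $\{M: h^0(M(q-p))>0\}$ restricts to a divisor of class $2\xi$ supported on the images of $x_p$ and $x_{\sigma(q)}$. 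These are two distinct translates of $\Xi$, which forces multiplicity one. The pair $\Xi_p,\Xi_{\sigma p}$ you mention would correspond to $q=p$, where the translate contains all of $\Pr^-$.

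\textbf{The Cohen--Macaulay step also has a slip.} A fibre over a point of the $g$-dimensional $\Pic^{2g-2}(C)$ has expected codimension $g$, not $g-1$. So $X^-$ is an excess fibre (reflecting $\Pr^-\subset W_{2g-2}(\tC)$), and the $g$ pulled-back equations are not a regular sequence.

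\textbf{How to repair everything at once.} The Cohen--Macaulay conclusion is true, and its correct proof closes both gaps without ever using $\pi$ or $V^2(f)$.
\begin{enumerate}
\item The map $f_*\colon\tC^{(2g-2)}\to C^{(2g-2)}$ is finite and flat (miracle flatness), and \'etale at divisors with reduced image.
\item $X^-$ is open and closed in $(f_*)^{-1}(|\omega_C|)$. Hence it is finite flat over $|\omega_C|\cong\bP^{g-1}$, so Cohen--Macaulay, and each of its components dominates $|\omega_C|$.
\item Set $x_{f(p)}:=f(p)+C^{(2g-3)}$. Then $(f_*)^*x_{f(p)}=x_p+x_{\sigma(p)}$, and $x_{f(p)}$ cuts $|\omega_C|$ in the reduced hyperplane $H$ of canonical divisors through $f(p)$.
\item Every component of $x_p\cap X^-$ has dimension $g-2$ and maps finitely to $H$, so it dominates $H$.
\item A general member of $H$ is reduced, since $C$ is not hyperelliptic. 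Over such members the pullback of $H$ is \'etale over $H$, hence reduced.
\end{enumerate}
So $x_p|_{X^-}$ is generically reduced, and being Cohen--Macaulay it is reduced, uniformly in $p$.

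For comparison, the paper proves transversality for general $p$ by a tangent-space argument at points with distinct support. It then reaches arbitrary $p$ by identifying the image of $x_p\cap X^-$ with Beauville's special subvariety for $|\omega_C(-f(p))|$, whose class $\xi$ is primitive.
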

\begin{proof}
First we check the assertion for a general $p\in\tC$. Let $\zeta=p_1+\cdots+p_{2g-2}\in X^-$ be a smooth point, such that the points $p_i\in\tC$ are all distinct. The intersection of Zariski tangent spaces
$T_\zeta (x_{p_1})\cap\cdots\cap T_\zeta (x_{p_{2g-2}})$ inside $T_\zeta(\tC^{2g-2})$ is $0$, and so there exists an index $i$ such that $T_\zeta (X^-)\not\subset T_\zeta (x_{p_i})$. It follows that, for a general $p\in \tC$ and $\zeta\in x_p\cap X^-$, the intersection $x_p\cap X^-$ is transverse at $\zeta$, and so $x_p \cap X^-$ is reduced.

To derive the assertion for any $p$, note that the map $x_p\cap X^-\to \mathrm{Pr}^-$ is generically one-to-one onto its image, so $x_p \cap X^-$ is reduced if and only if its schematic image $\pi(x_p \cap X^-)$ is reduced. 

The image $\pi(x_p \cap X^-)$ in $\mathrm{Pr}$ can be identified set-theoretically with Beauville’s special subvariety \cite{beauville} defined by the complete linear system $|\omega_C(-f(p))|$ on $C$. The latter is reduced, since by the main result of \emph{loc.\ cit.}, its cohomology class is $\xi$ (which is non-divisible). Since ${x_p}_{|_{X^-}}$ is reduced for general $p$, the identification holds scheme theoretically (for every $p\in\tC$), and the lemma follows.
\end{proof}

By abuse of notation, from now on we write $x_p$ for ${{x_p}_|}_{X^-}$.
We will also write $x:=x_{p_0}$.

\begin{lem}\label{lemma52}
    For every $p\in\tC$ and $\beta=b(p)\in \widetilde C\subset \Pr$, there is an equality $E+x_p=\pi^*(\Xi_\beta)$, where $E$ is supported on the exceptional divisor of $\pi$.
\end{lem}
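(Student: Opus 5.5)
We follow the pattern of \autoref{can_Cg}. First we describe $\pi^*(\Xi_\beta)$ set-theoretically on $X^-$. Under the identifications $\Pr\cong\Pr^-$ (via $M$) and $\Pr\cong\Pr^+$ (via $M(\sigma(p_0)-p_0)$), a point $\zeta=p_1+\cdots+p_{2g-2}\in X^-$ maps to $L:=\cO_{\tC}(\zeta)\in\Pr^-$. The condition $\pi(\zeta)\in\Xi_\beta$ with $\beta=b(p)$ unwinds to
\[
L\otimes\cO_{\tC}\big(\sigma(p)-p\big)\in\Xi\subset\Pr^+,
\]
that is, $h^0(\tC,L(\sigma(p)-p))\geq 2$. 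By parity this is the same as $h^0\geq1$, and the sign conventions should be checked against the choice of $M$. Thus
\[
\pi^*(\Xi_\beta)=\big\{\zeta\in X^-:\ h^0(\tC,\cO_{\tC}(\zeta-p+\sigma(p)))>0\big\}.
\]

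Next we show that this locus contains $x_p$. If $p\le\zeta$, then $h^0(L(-p))\geq h^0(L)-1\geq 0$. We need a nonzero section, and we get it from parity. The bundle $L(\sigma(p)-p)$ lies in $\Pr^+$, so its $h^0$ is even. Since $h^0(L(-p))\ge1$ when $p\le\zeta$ (the section of $\zeta$ itself gives $\zeta-p$ effective), we get $h^0(L(\sigma(p)-p))\ge h^0(L(-p))\ge1$, hence $h^0(L(\sigma(p)-p))\ge 2$. So $x_p\subset\pi^*(\Xi_\beta)$ set-theoretically. Off the exceptional locus (where $h^0(L)=1$ and $\pi$ is an isomorphism), the converse holds generically: if $h^0(L(\sigma(p)-p))\ge2$ and $h^0(L)=1$, then $h^0(L(-p))\ge1$ by comparing $L(-p)\subset L$ and $L(-p)\subset L(\sigma(p)-p)$. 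More precisely, $h^0(L(-p))\ge h^0(L(\sigma(p)-p))-1\ge1$, so $\zeta$ is the unique divisor of $|L|$ and contains $p$. Hence, set-theoretically, $\pi^*(\Xi_\beta)=x_p\cup(\text{part of }\operatorname{Exc}(\pi))$.

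It remains to compare multiplicities along $x_p$. By \autoref{remark}, $x_p$ is reduced. So it suffices to show that $\pi^*(\Xi_\beta)$ has multiplicity one along each component of $x_p$ not contained in the exceptional divisor. Over the isomorphism locus $X^-\setminus\pi^{-1}(V^2(f))$, the divisor $\pi^*\Xi_\beta$ is just $\Xi_\beta$, which is reduced since $\Xi$ is a principal polarization divisor. We then write $\pi^*(\Xi_\beta)=x_p+E$ with $E\ge0$ effective and supported on $\pi^{-1}(V^2(f))$, which is the exceptional divisor.

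The main obstacle is the multiplicity/scheme-theoretic comparison, as opposed to the set-theoretic one. This includes making sure that no component of $x_p$ lies inside the exceptional locus, and confirming that the identification $\pi(x_p)$ with Beauville's special subvariety of class $\xi$ (used in \autoref{remark}) is compatible with $\Xi_\beta$ being the whole divisor. We would check the latter by cohomology classes: $\pi_*x_p$ has class $\xi=[\Xi_\beta]$, so $\pi_*x_p=\Xi_\beta$. It follows that $\pi^*\Xi_\beta-x_p$ pushes forward to $0$ and is therefore $\pi$-exceptional.
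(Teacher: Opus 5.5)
Your proposal is correct and follows essentially the same route as the paper. Both arguments use the set-theoretic description of $\pi^*(\Xi_\beta)$ via $h^0(\cO_{\tC}(\zeta+\sigma(p)-p))\geq 2$, get $x_p\subset\pi^*(\Xi_\beta)$ from parity, and invoke reducedness of $x_p$ (\autoref{remark}) and of $\Xi_\beta$ away from the exceptional locus; your step showing that points of $\pi^*(\Xi_\beta)$ with $h^0(L)=1$ lie in $x_p$ is just the contrapositive of the paper's argument that points outside $x_p$ have $h^0\geq 3$.

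Your closing paragraph is not needed. The cohomology-class check via Beauville's class $\xi$ is valid, but only after noting $\Xi_\beta-\pi_*x_p\geq 0$ from the effectivity already established. Also, whether some component of $x_p$ lies in the exceptional locus does not affect the statement.
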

\begin{proof}

Under the previous identifications, one can readily check that, set-theoretically, $\pi^*(\Xi_\beta)$ coincides with the divisor
\[
\Bigl\{
p_1+\cdots+p_{2g-2}\in X^- \subset \widetilde C^{(2g-2)}
:\;
h^0\!\left(
\widetilde C,
\cO_{\widetilde C}(p_1+\cdots+p_{2g-2}+\sigma(p)-p)
\right)\ge 2
\Bigr\}.
\]
Since $\cO_{\widetilde C}(p_1+\cdots+p_{2g-2}+\sigma(p)-p)\in {\Pr}^+$ for every $p_1+\cdots+p_{2g-2}\in X^-$, it follows that $x_p\subset \pi^*(\Xi_\beta)$ by reducedness of $x_p$. And since  $\pi^*\Xi_\beta$ is reduced away from the exceptional divisor of $\pi$,  we deduce that $\pi^*(\Xi_\beta)-x_p$ is effective and does not contain $x_p.$

Now we consider a point
$q_1+\cdots+q_{2g-2}$ in $\pi^*(\Theta_\beta)\setminus x_p$. 
Since it does not lie in $x_p$, we must have $p\neq q_i$ for all $i$, whereas certainly $p\neq \sigma(p)$. It follows that
\[
h^0\!\left(
\widetilde C,
\cO_{\widetilde C}(q_1+\cdots+q_{2g-2}+\sigma(p)-p)
\right)
<
h^0\!\left(
\widetilde C,
\cO_{\widetilde C}(q_1+\cdots+q_{2g-2}+\sigma(p))
\right),
\]
where the left-hand side is $\geq2$. Therefore,
\[
h^0\!\left(
\widetilde C,
\cO_{\widetilde C}(q_1+\cdots+q_{2g-2})
\right)\ge 2
\]
and since $q_1+\cdots+q_{2g-2}\in X^-$, the parity condition along ${\Pr}^-$ implies that
\[
h^0\!\left(
\widetilde C,
\cO_{\widetilde C}(q_1+\cdots+q_{2g-2})
\right)\ge 3.
\]
Consequently $\pi(q_1+\cdots+q_{2g-2})\in V^2(f)$, that is, $q_1+\dots +q_{2g-2}$ belongs to the exceptional divisor of $\pi$.
\end{proof}

Now we consider the vector bundle $(\pi^*G)(x)$ on $X^-$ and the determinant map
\[
\det:\bigwedge^{g-1}H^0\!\bigl(X^-,(\pi^*G)(x)\bigr)\longrightarrow
H^0\!\left(X^-,\det\bigl((\pi^*G)(x)\bigr)\right),
\]
whose image is the key point to establish the following:

\begin{prop}
    The vector bundle $(\pi^*G)(x)$ on $X^-$ is globally generated.
    \label{globalgenerationprym}
\end{prop}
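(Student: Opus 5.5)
The plan is to mimic the Jacobian case in \autoref{prop:decomposable} and \autoref{prop:bp-free}: produce many sections of $(\pi^*G)(x)$ with explicit zero loci, show that their determinants give a base point free family of divisors in $|\det((\pi^*G)(x))|$, and conclude global generation.

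\textbf{Step 1: sections $\widetilde{s_\beta}$.} By \autoref{prympicard}, for every $\beta\in\tC\subset\Pr$ there is a section $s_\beta\in H^0(\Pr,G(\Xi))$, unique up to scalar. It is the image of the one-dimensional space $H^0(G\otimes P_\beta)$ under multiplication by the section defining $\Xi_\beta$, so $\Xi_\beta\subset Z(s_\beta)$. By \autoref{lemma52}, $\pi^*\Xi_\beta=E_\beta+x_\beta$ with $E_\beta$ supported on the exceptional divisor.

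I would first check that $E_\beta$ is independent of $\beta$, so that it equals a fixed divisor $E$ with $\pi^*\Xi\equiv E+x$. A numerical argument as in \autoref{can_Cg} should work here, since $E_\beta$ varies in a connected family of effective exceptional divisors and these are rigid. Then $(\pi^*G)(x)=\pi^*(G(\Xi))(-E)$ up to this linear equivalence, and $\pi^*s_\beta$ descends to a section $\widetilde{s_\beta}$ of $(\pi^*G)(x)$ vanishing on $x_\beta$.

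\textbf{Step 2: decomposable determinants.} I would invoke Pareschi's generation result, \cite[Theorem D]{pareschi-generation}, for $G$, which is a GV sheaf by the cohomology computation in \autoref{prympicard}. This shows that $G$ is generated by $\{\tC\}$, so for general $\beta_1,\dots,\beta_{g-1}\in\tC$ the determinant $\det(\widetilde{s_{\beta_1}}\wedge\cdots\wedge\widetilde{s_{\beta_{g-1}}})$ is nonzero.

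Since $\det G=\cO(2\Xi)$, its divisor contains $x_{\beta_1}+\cdots+x_{\beta_{g-1}}$. The residual class is
\[
\pi^*(2\Xi)+(g-1)x-\textstyle\sum_i x_{\beta_i},
\]
which should be $\pi^*$ of a translate of $2\Xi$ corrected by a multiple of $E$. Its effective representatives come from $|2\Xi_\gamma|$, which is a $2^{g-1}$-dimensional base point free system.

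Unlike the Jacobian case, the residual divisor is therefore not unique. I would instead show directly that, as $\beta_i$ and the residual vary, the image of $\det$ contains the divisors
\[
x_{\beta_1}+\cdots+x_{\beta_{g-1}}+\pi^*D,\qquad D\in|2\Xi_{\gamma}|,
\]
where $\gamma$ depends on the $\beta_i$. Closedness of $\bP\,\mathrm{Im}(\det)$ then extends this from general $\beta_i$ to all $\beta_i\in\tC$.

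\textbf{Step 3: base point freeness.} Given $\zeta\in X^-$, I choose $\beta_i$ with $\zeta\notin x_{\beta_i}$, i.e.\ $b^{-1}(\beta_i)$ not in the support of $\zeta$. It then remains to show that the residual part avoids $\zeta$.

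\textbf{Main obstacle.} This is Step 3 near the exceptional locus over $V^2(f)$, and more precisely controlling the residual piece and $E$ there. If the residual is really $\pi^*$ of a base point free system, I am done. Otherwise, I would argue as in \autoref{prop:maximalrank}: intersecting with zero loci $Z(\widetilde{s_\beta})\supset x_\beta$, which are ample or at least meet every curve, forces any base locus to be empty.
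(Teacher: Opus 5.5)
There is a genuine gap, and it is exactly the point you flag as the main obstacle. Step~2 also contains a misconception that hides it.

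For fixed $\beta_1,\dots,\beta_{g-1}$ the residual divisor is \emph{not} free to move in $|2\Xi_\gamma|$, because $\det(\widetilde{s_{\beta_1}}\wedge\cdots\wedge\widetilde{s_{\beta_{g-1}}})$ is one specific section. Write $s_\beta=u_\beta\cdot\vartheta_\beta$, with $u_\beta\colon P_\beta^{-1}\to G$ the unique map and $\vartheta_\beta$ an equation of $\Xi_\beta$. Then, up to a scalar,
$\det(\widetilde{s_{\beta_1}}\wedge\cdots\wedge\widetilde{s_{\beta_{g-1}}})=\delta_1\cdots\delta_{g-1}\cdot\pi^*\det(u_{\beta_1},\dots,u_{\beta_{g-1}})$,
where $\delta_i$ is the canonical section of $\cO(x_{\beta_i})$. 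So the residual is exactly $\pi^*R(\beta_1,\dots,\beta_{g-1})$, with $R$ the degeneracy locus of $\bigoplus_i P_{\beta_i}^{-1}\to G$ on $\Pr$. No correction by $E$ is needed, and these residuals form only a $(g-1)$-parameter family.

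Nothing in your argument shows that $\mathrm{Im}(\det)$ contains $x_{\beta_1}+\cdots+x_{\beta_{g-1}}+\pi^*D$ for every $D\in|2\Xi_\gamma|$. Base point freeness of $|2\Xi_\gamma|$ is therefore beside the point. What has to be proved is that $B=\bigcap R(\beta_1,\dots,\beta_{g-1})$ is empty.

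Your fallback cannot supply this. The argument of \autoref{prop:maximalrank} uses surjectivity of the full evaluation map (i.e.\ \autoref{prop:bp-free}) as an \emph{input}. Ampleness of the $x_\beta$ only forces intersections of zero loci to be non-empty, which can contradict global generation but never establish it. As written, Step~3 is either unproved or circular.

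The missing idea, used in the paper, is to compute $R$ by lifting to $J\tC$.
\begin{itemize}
\item By \autoref{rem:picard-restriction}, $G=K_L|_{\Pr}$. By \eqref{restriction-prym}, the map $P_\beta^{-1}\to G$ for $\beta=b(p)$ is the restriction of the map $P_\alpha^{-1}\to K_L$ for $\alpha=a(p)$.
\item On $J\tC$, the degeneracy locus of $\bigoplus_i P_{\alpha_i}^{-1}\to K_L$ is a divisor in $|\det K_L\otimes\bigotimes_i P_{\alpha_i}|$. Since $\det K_L=\cO(\Theta^L)$ is a principal polarization, this linear system is a single translate of $\Theta^L$, namely $\Theta^L_{\iota(\alpha_1+\cdots+\alpha_{g-1})}$.
\item Hence $R(\beta_1,\dots,\beta_{g-1})=\Theta^L_{\iota(\alpha_1+\cdots+\alpha_{g-1})}\cap\Pr$, and $B$ is the intersection with $\Pr$ of all these theta translates.
\item A point $\alpha$ of that intersection satisfies $h^0(\tC,L\otimes\alpha(p_1+\cdots+p_{g-1}))>0$ for all $p_i\in\tC$. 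By Riemann--Roch and a general choice of the $p_i$, this forces $L\otimes\alpha$ to be effective.
\item Since $\alpha\in\Pr$ has trivial norm, $\Nm_f(L)$ would then be effective, contradicting $h^0(C,\Nm_f(L))=0$.
\end{itemize}
Your proposal never uses this hypothesis on $L$ beyond the local freeness of $G$, which is a good sign that the decisive input was missing. Your Step~1 (constancy of $E_\beta$) and your use of Pareschi's generation result are in line with what the paper does.
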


\begin{proof}
   The proof applies the same strategy of \autoref{prop:decomposable}\eqref{prop:decomp2}. First, using \autoref{prympicard} we see that for every $\beta\in\tC$ there exists a (unique) global section $s_\beta\in H^0(G(\Xi))$ such that $\Xi_\beta\subset Z(s_\beta)$. The Picard sheaf $G$ is again generated by $\{\tC\}$ in the sense of \cite{pareschi-generation}, and so for general points $\beta_1,...,\beta_{g-1}\in\tC$, one has
   \[
   Z\left(\det(s_{\beta_1}\wedge ...\wedge s_{\beta_{g-1}})\right)=
\Xi_{\beta_1}+\cdots+ \Xi_{\beta_{g-1}}+R(\beta_1,...,\beta_{g-1})
   \]
   for a suitable residual divisor $R(\beta_1,\dots,\beta_{g-1})$ of class $2\xi$.
   Then, in view of \autoref{lemma52}, considering  $(\pi^*G)(x)$, it follows that the linear system $|\mathrm{Im}(\mathrm{det})|$ on $X^-$ contains the divisors
   \[
x_{\beta_1}+ x_{\beta_2}+\cdots+ x_{\beta_{g-1}}+ \pi^*R(\beta_1+\cdots+\beta_{g-1}),
   \]
   for general $\beta_1,\dots,\beta_{g-1}\in \widetilde C$. Hence the base locus of $|\mathrm{Im}(\mathrm{det})|$ is contained in $\pi^*B$, where  
   \[
   B:=\bigcap R(\beta_1,\dots,\beta_{g-1})
   \]
(the intersection running over general points $\beta_1,...,\beta_{g-1}\in\tC$).  It follows that $(\pi^*G)(x)$ is globally generated away from $\pi^*B$, so to finish the proof it suffices to check that $B$ is empty.
   
To this end observe that, as pointed out in \autoref{rem:picard-restriction}, the vector bundle $G$ is the restriction to $\mathrm{Pr}$ of the Picard sheaf $K_L$ on $J\widetilde C$, whose determinant is the divisor $\Theta^L$.

For any $p\in \widetilde C$, we set $\alpha=a(p)$ and $\beta=b(p)$. Then the unique (up to scalar) map $P_\beta^{-1}\to G$ on $\mathrm{Pr}$ is the restriction of the map $P_\alpha^{-1}\to K_L$ on $J\tC$ (recall \eqref{restriction-prym}). If $\beta_1,...,\beta_{g-1}\in\tC$ are general points, it follows that the residual divisor $R(\beta_1,\dots,\beta_{g-1})$---which is the locus where $\bigoplus_{i=1}^{g-1}P_{\beta_i}^{-1}\to G$ drops rank---is the restriction to $\mathrm{Pr}$ of the corresponding  degeneracy locus for $\bigoplus_{i=1}^{g-1}P_{\alpha_i}^{-1}\to K_L$. The latter is, for determinant reasons, the translate $\Theta^L_{\iota(\alpha_1+\cdots+\alpha_{g-1})}$ of $\Theta^L$. Hence
\[
R(\beta_1,\dots,\beta_{g-1})
=
\Theta^L_{\iota(\alpha_1+\cdots+\alpha_{g-1})}\cap \mathrm{Pr},
\]
which implies
\[
B=\bigcap_{\alpha_1,\dots,\alpha_{g-1}\in \widetilde C}
\Bigl(\Theta^L_{\iota(\alpha_1+\cdots+\alpha_{g-1})}\cap \mathrm{Pr}\Bigr)
=
\left(\bigcap_{\alpha_1,\dots,\alpha_{g-1}\in \widetilde C}\Theta^L_{\iota(\alpha_1+\cdots+\alpha_{g-1})}\right)\cap \mathrm{Pr}.
\]
An elementary computation shows that any $\alpha\in\Pic^0(\tC)=J\tC$ in this intersection has trivial norm and satisfies that $L\otimes \alpha$ is effective. If such an $\alpha$ existed, then $\mathrm{Nm}_f(L)=\Nm_f(L\otimes \alpha)$ would be effective, contradicting our assumption on $L$; therefore, $B$ is empty and the proof is complete.
\end{proof}

We let $\nu:\widetilde{X^-}\to X^-$ be a desingularization of $X^-$, and denote $\psi:=\nu\circ \pi$. Consider the vector bundle $\widetilde G=\nu^*((\pi^*G)(x))$.

\begin{prop}
   The vector bundle $\widetilde G$ is globally generated and satisfies $c_{g-1}(\widetilde{G})=2^{2g-3}$.
    \label{chernclassprym}
\end{prop}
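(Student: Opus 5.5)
Global generation is immediate: by \autoref{globalgenerationprym} the bundle $(\pi^*G)(x)$ on $X^-$ is globally generated, and pullbacks of globally generated bundles stay globally generated, so $\widetilde G=\nu^*((\pi^*G)(x))$ is globally generated. The real content is the Chern class computation. Since $\nu$ is birational and both $c_{g-1}$ and pullback commute, I will use the projection formula to reduce $c_{g-1}(\widetilde G)=\deg\bigl(\nu^*c_{g-1}((\pi^*G)(x))\bigr)$ to an intersection number on $X^-$, or more conveniently on $\tC^{(2g-2)}$, where $X^-$ sits with a known class.

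By the splitting principle, as in the Jacobian case,
\[
c_{g-1}\bigl((\pi^*G)(x)\bigr)=\sum_{k=0}^{g-1} x^k\, c_{g-1-k}(\pi^*G)=\sum_{k=0}^{g-1} x^k\,\frac{2^{g-1-k}(\pi^*\xi)^{g-1-k}}{(g-1-k)!},
\]
using \autoref{prympicard} for $c_i(G)=2^i\xi^i/i!$. So the task is computing the mixed numbers $x^k(\pi^*\xi)^{g-1-k}$ on $X^-$ (pulled back to $\widetilde{X^-}$). The first step is to handle the pure terms. For $k=0$ the number is $\xi^{g-1}=(g-1)!$ since $\pi$ is birational, contributing $2^{g-1}$.

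For the mixed terms I plan to use that $x|_{X^-}$ restricted gives $x_p\cap X^-$, which by \autoref{lemma52} and \autoref{remark} maps birationally onto a divisor of class $\xi$ (Beauville's special subvariety of class $\xi$). More precisely, intersecting $X^-$ with $x_{p_1},\dots,x_{p_k}$ for distinct general points gives $\{p_1+\dots+p_k+D\}\cap X^-$, which maps birationally onto a translate of a Beauville subvariety $\{L\in\Pr^-: h^0(L(-p_1-\dots-p_k))>0\}$; I would compute its class in $\Pr$ to be $2^{k-1}\xi^k/k!$ for $k\ge1$ (the general Beauville/special subvariety formula for the linear system $|\omega_C(-f(p_1)-\dots-f(p_k))|$ of dimension $g-1-k$, class $2^{k-1}\xi^k/k!$ analogously to $W_{g-k}$ having class $\theta^k/k!$). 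Then $x^k(\pi^*\xi)^{g-1-k}=2^{k-1}\frac{(g-1)!}{k!}$ for $1\le k\le g-1$, generic transversality being justified by moving the points $p_i$ and using ampleness of $x$ / reducedness as in \autoref{remark}.

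Summing, $c_{g-1}=2^{g-1}+\sum_{k=1}^{g-1}\binom{g-1}{k}2^{g-1-k}2^{k-1}=2^{g-1}+2^{g-2}(2^{g-1}-1)$, which does not give $2^{2g-3}$; so the main obstacle is getting the class of $x^k\cap X^-$ right, and I expect the correct count is $x^k(\pi^*\xi)^{g-1-k}=2^{k}\frac{(g-1)!}{k!}\cdot\tfrac{?}{}$ adjusted so the sum becomes $\sum_k\binom{g-1}{k}2^{g-1-k}2^{k}\cdot$(something) $=2^{2g-3}$; e.g.\ with $x^k(\pi^*\xi)^{g-1-k}=2^{2k}\frac{(g-1)!}{k!}\cdot 2^{-k}\cdot$ one gets $\sum\binom{g-1}{k}2^{g-1}=2^{2g-2}$, off by $2$. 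I would therefore compute these numbers carefully, most likely via the class of $X^-$ in $\tC^{(2g-2)}$ and the known ring structure (Poincaré's formula, $\widetilde\theta|_{\Pr}=2\xi$) in $H^*(\tC^{(2g-2)})$, checking the answer against the $g=2$ case where $\Pr$ is an elliptic curve and $c_1=\deg(\pi^*G(x))$ must equal $2$.
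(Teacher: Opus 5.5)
\textbf{Your computation is correct, and so is your suspicion. The stated value $2^{2g-3}$ is wrong; the correct value is $2^{2g-3}+2^{g-2}$.}

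Your global generation argument is the paper's. Your set-up for the Chern class is also the paper's: the expansion $c_{g-1}(\widetilde G)=\sum_k x^k c_{g-1-k}(\psi^*G)$, the classes $c_i(G)=2^i\xi^i/i!$, and Beauville's class $2^{k-1}\xi^k/k!$ for the image of $x_{p_1}\cap\dots\cap x_{p_k}\cap X^-$ when $k\ge 1$. Your arithmetic is right: the sum is
\[
2^{g-1}+2^{g-2}(2^{g-1}-1)=2^{2g-3}+2^{g-2}.
\]
The paper reaches $2^{2g-3}$ only because it applies $\psi_*x^k=2^{k-1}\xi^k/k!$ also at $k=0$, i.e.\ it takes $(\psi^*\xi)^{g-1}=(g-1)!/2$. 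That is impossible: $\psi$ is birational, so $(\psi^*\xi)^{g-1}=\xi^{g-1}=(g-1)!$. At $k=0$ the relevant variety is $X^-$ itself, which has class $1$, not $\tfrac12$. So the $k=0$ term is $2^{g-1}$, not $2^{g-2}$, and the paper's final identity is off by exactly $2^{g-2}$.

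\textbf{Where your proposal goes wrong.} The gap is your last paragraph, where you look for corrected intersection numbers that would make the sum equal $2^{2g-3}$. None exist, because each input can be checked independently.
\begin{itemize}
\item For $k=0$, birationality gives $(\psi^*\xi)^{g-1}=(g-1)!$.
\item $x\cdot(\psi^*\xi)^{g-2}=(g-1)!$ follows from \autoref{lemma52}: the divisor $E$ there is contracted into $V^2(f)$, which has codimension $3$.
\item $x^{g-1}=2^{g-2}$ by a direct count. For general $p_i$, the system $|\omega_C(-f(p_1)-\dots-f(p_{g-1}))|$ is a single reduced divisor. It has $2^{g-1}$ lifts to $\tC$, and exactly half of these lie over $\Pr^-$ by the parity lemma.
\end{itemize}

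\textbf{The test case $g=3$.} This is the first case the paper's construction covers, since $C$ must be non-hyperelliptic. A degree-$4$ line bundle with $h^0\ge 3$ on the genus-$5$ curve $\tC$ would make $\tC$, hence $C$, hyperelliptic. So $V^2(f)=\emptyset$, $X^-\cong\Pr$, $\nu=\mathrm{id}$, and $x\equiv\xi$ by \autoref{lemma52}. Then
\[
c_2(\widetilde G)=c_2(G)+c_1(G)\cdot\xi+\xi^2=4+4+2=10\neq 8 .
\]

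\textbf{Your proposed $g=2$ check.} As phrased it presupposes the value $2$, and $g=2$ lies outside the paper's non-hyperelliptic setting anyway. Done directly it gives $\deg G+\deg x=2+1=3$, again equal to $2^{2g-3}+2^{g-2}$.

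\textbf{Conclusion.} The statement as written cannot be proved along these lines; indeed it fails for $g=3$. What your argument establishes, and what the paper's argument establishes once the $k=0$ term is corrected, is $c_{g-1}(\widetilde G)=2^{2g-3}+2^{g-2}$. Via \autoref{deg-gg} this yields only $\irr(\Pr)\le 2^{2g-3}+2^{g-2}$.
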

\begin{proof}
The global generation of $\widetilde G$ follows directly from the previous proposition. 

For the computation of $c_{g-1}(\widetilde{G})$, first observe that, in virtue of \autoref{prympicard}, 
\[
c_i(\psi^*G)=2^i\frac{\psi^*\xi^i}{i!}\in H^{2i}(\widetilde {X^-},\bZ).
\]
On the other hand, denoting also by $x$ the cohomology class $c_1(\mathcal{O}_{\widetilde{X^-}}(\nu^*x))\in H^2(\widetilde{X^-},\bZ)$, we claim that $\psi_*x^k=2^{k-1}\frac{\xi^{k}}{k!}$. Indeed, fix general points $p_1, \dots, p_k\in \widetilde C$. By a similar argument to that of the proof of \autoref{remark} the intersection $x_{p_1}\cap \dots \cap x_{p_k}\subset X^-$ is reduced, and maps generically 1:1 onto its image in $\mathrm{Pr}$. This image can in turn be identified with Beauville's special subvariety defined by the complete linear system $|\omega_C(-f(p_1)-...-f(p_k))|$ on $C$, and so it has cohomology class $2^{k-1}\frac{\xi^{k}}{k!}$ (see \cite{beauville}).

Therefore, a direct application of projection formula yields
\[
         x^k\cdot (\psi^*\xi)^{g-1-k}=\psi_*x^k\cdot \xi^{g-1-k}=2^{k-1}\frac{(g-1)!}{k!}.
\]

Applying the splitting principle (again, as in \cite[Remark 3.2.3]{fulton}) yields
\[
c_{g-1}(\widetilde G)=c_{g-1}({\psi}^*G\otimes \mathcal O_{\widetilde X^-}(\nu^*x))=\sum_{k=0}^{g-1}2^{g-1-k} x^k\frac{ (\psi^*\xi)^{g-1-k}}{(g-1-k)!}=2^{g-2}\sum_{k=0}^{g-1}\binom{g-1}{k}=2^{2g-3},
\]
which completes the proof.
\end{proof}

Observe that at this point, \autoref{thm:intro-prym-bound} follows directly by combining \autoref{chernclassprym} with \autoref{deg-gg}.

\bibliography{refer}
\bibliographystyle{alphaspecial}
\end{document}